\begin{document}

\newtheorem{teo}{Theorem}[section]
\newtheorem{coro}[teo]{Corollary}
\newtheorem{lema}[teo]{Lemma}
\newtheorem{fact}[teo]{Fact}
\newtheorem{ejem}[teo]{Example}
\newtheorem{obser}[teo]{Observation}
\newtheorem{rem}[teo]{Remark}
\newtheorem{ejer}[teo]{Exercise}
\newtheorem{propo}[teo]{Proposition}
\newtheorem{defi}[teo]{Definition}
\newtheorem{question}{Question}

\def\eqdef{\stackrel{\rm def}{=}}
\def\Proof{{\noindent {\em Proof:\ }}}
\def\fp{\hfill $\Box$}

\def\alex{\mbox{\sf AT}}
\def\baire{{\nat}^{\nat}}
\def\bairez{{\bf Z}^{\omega}}
\def\base{{\mathcal B} }
\def\binary{2^{< \omega}}
\def\cantor{2^{\nat}}
\def\cantorx{2^X}
\def\ca{{\mathcal A}}
\def\cc{{\mathcal C}}
\def\ci{{\,mathcal I}}
\def\cj{{\mathcal J}}
\def\ck{{\mathcal K}}
\def\cL{{\mathcal{L}}}
\def\coA{{\mathbb{A}}}
\def\coB{{\mathbb{B}}}
\def\coL{{\mathbb{L}}}
\def\coK{{\mathbb{K}}}
\def\cl#1{\overline{#1}}
\def\cofin{\mbox{\sf CoFIN}}
\def\ds{{\frak D}}
\def\esptop{(X,\top)}
\def\fsig{F_{\sigma}}
\def\fsd{F_{\sigma\delta}}
\def\fin{\mbox{\sf Fin}}
\def\filter{{\mathcal F}}
\def\afilter{\vec{\filter}}
\def\gfilter{\mathcal G}
\def\ged{G_\delta}
\def\hilbert{[0,1]^{\nat}}
\def\hm {O\check{c}(\mbox{$\tau$-closed, $\tau$})}
\def\ideal{{\mathcal I}}
\def\inte#1{\buildrel {\;\circ} \over #1}
\def\intseq{{\bf Z}^{<\omega}}
\def\ikl{[K,L]}
\def\iko{[K,O]}
\def\leqtau{\leq_{\tau}}
\def\k#1{{\mathcal K}(#1)}
\def\lx{<_{\scriptscriptstyle X}}
\def\ly{<_{\scriptscriptstyle Y}}
\def\rl{\Re_{l}}
\def\nat{\mathbb{N}}
\def\N{\mathbb{N}}
\def\nwd{\mbox{\sf nwd}}
\def\opcla{{\bf cl}}
\def\nin{\not\in}
\def\oc#1{O\check{c}({\fin},{#1})}
\def\oppoint{$\mbox{\bf op}$}
\def\oqpoint{$\mbox{\bf oq}$}
\def\ptres{\emptyset\times\mbox{FIN}}
\def\prodxa{\prod_\alpha \xa}
\def\power#1{{\mathcal P}(#1)}
\def\powerx{{\mathcal P}(X)}
\def\ppoint{$\mbox{p}^+$}
\def\pp{\mbox{$\mathbf  p^+$}}
\def\Pm{\mbox{$\mathbf  p^-$}}
\def\qpoint{$\mbox{q}^+$}
\def\qq{\mbox{$\mathbf{q}^+$}}
\def\Q{\mathbb{Q}}
\def\qed{\hfill \mbox{$\Box$}}
\def\R{\mathbb{R}}
\def\rpoint{$\mbox{rp}^+$}
\def\sd{\scriptstyle{\triangle}}
\def\seq{{\omega}^{<\omega}}
\def\sfan{S(\omega)}
\def\somega{S_\omega}
\def\soi{S(\ideal)}
\def\su{\subseteq}
\def\swc{\overline{S_{\Omega}}}
\def\Sq{\mbox{$\mathbf{p}^-$}}
\def\Rq{\mbox{$\mathbf{ws}$}}
\def\sw{S_{\Omega}}
\def\sx{\scriptscriptstyle X}
\def\sy{\scriptscriptstyle Y}
\def\tauf{\tau_{\scriptscriptstyle \filter}}
\def\tauaf{\tau_{\scriptscriptstyle \afilter}}
\def\tauc{\tau_{\scriptscriptstyle \cc}}
\def\tauv{\tau_{V}}
\def\ti{T(\ideal)}
\def\topy{\top_{\sy}}
\def\topo{topolog\II a }
\def\top{\tau}
\def\ult{\mathcal U}
\def\vkl{{\mathcal V}_{\scriptscriptstyle{K,L}}}
\def\veba#1#2{{\mathcal V}_{\scriptscriptstyle{#1,#2}}}
\def\iveba#1#2{[#1,#2]}
\def\ve{{\mathcal V}}
\def\ufilter{\mathcal U}
\def\w1{\omega_1}
\def\wpp{\mbox{$\mathbf{wp}^+$}}
\def\xa{X_\alpha}
\def\Z{\mathbb{Z}}

\def\sequen{\nat^{<\bf \scriptscriptstyle \omega}}
\def\casig{=^*}

\def\pie{\Pi_1^1}
\def\pieb{\boldsymbol{ \Pi}_1^1}
\def\pie03{{\mathbf{\Pi}}_3^0}
\def\sigb{{\bf \Sigma}_1^1}
\def\delb{\boldsymbol{ \Delta}_1^1}
\def\si12{\Sigma^{1}_{2}}
\def\del{\Delta_1^1}
\def\zig{\Sigma_1^1}
\def\pai2{\mathbf{\Pi}_2^0}
\def\sig2{{\bf \Sigma}^{0}_{2}}

\title{On the Ellis semigroup of a cascade on  a compact  metric countable space}
\author{Andres Quintero   \and Carlos Uzc\'ategui}
\address{Escuela de Matem\'aticas, Facultad de Ciencias, Universidad Industrial de
	Santander, Ciudad Universitaria, Carrera 27 Calle 9, Bucaramanga,
	Santander, A.A. 678, COLOMBIA.}
\email{quintero.andres7@gmail.com}

\address{Escuela de Matem\'aticas, Facultad de Ciencias, Universidad Industrial de
	Santander, Ciudad Universitaria, Carrera 27 Calle 9, Bucaramanga,
	Santander, A.A. 678, COLOMBIA.}
\email{cuzcatea@saber.uis.edu.co}

\date{\today}

\thanks{The authors thank La Vicerrector\'ia de Investigaci\'on y Extensi\'on de la Universidad Industrial de Santander for the financial support for this work,  which is part  of the VIE project  \# 2422}

\subjclass[2010]{Primary  54H15, 54H20; Secondary } %Secondary is optional

\keywords{Discrete dynamical system; Ellis semigroup; compact metric countable space; equicontinuity; distality}

\begin{abstract} Let $X$ be a compact  metric countable space, let  $f:X\to X$ be a homeomorphism and let $E(X,f)$ be its Ellis semigroup. Among other results we show that the following statements are equivalent: (i) $(X,f)$   is equicontinuous, (ii)  $(X,f)$ is distal and (iii) every point is periodic.  We use this result to give a direct  proof of a theorem of Ellis saying that $(X,f)$ is distal if, and only if,  $E(X,f)$ is a group.  
\end{abstract}

\maketitle

\section{Introduction}
Let $X$ be a compact metric space and $f:X\to X$ a homeomorphism. The dynamical system $(X,f)$ is usually called a cascade.  Ellis introduced \cite{Ellis1960} the enveloping semigroup (also called the Ellis semigroup) $E(X,f)$  of the  dynamical system $(X,f)$ as the closure of $\{f^n:\; n\in \Z\}$ inside $X^X$ with the product topology.  $E(X,f)$ is a compact semigroup with composition as the algebraic operation. There is an extensive literature about the enveloping semigroup (see, for instance,  \cite{Auslander1988,Ellis2013,Glasner2007}). A homeomorphism $f:X\to X$ is  {\em distal} if 
\[
\inf \{d(f^n(x), f^n(y)):\; n\in \Z\}>0
\]
for every pair of different points $x$ and $y$ of $X$.    
A well known characterization of distality is as follows (see \cite{Auslander1988} page 69). The system  $(X,f)$ is distal if, and only if, every point of $(X\times X, f)$ is almost periodic and also if, and only if, $E(X,f)$ is a group. Distality is a weakening of another very important notion. 
A system $(X,f)$ is {\em equicontinuous}, if $\{f^n:\; n\in \Z\}$ is an equicontinuous family. It  is easy to verify that every equicontinuous system is distal. The converse is not true in general. When  $(X,f)$ is equicontinuous, every function in $E(X,f)$ is obviously  continuous and such systems are called {\em WAP}.  It is also well known that 
a system  $(X,f)$ is equicontinuous if, and only if, $(X,f)$ is distal and WAP (see \cite{Auslander1988} page 69). 

Continuing the work initiated in \cite{GRU2015,GRU2016},   we are interested on  dynamical systems  on  compact metric countable spaces.  Our main result is that, for  a compact metric countable space $X$ and a homeomorphism $f:X\to X$, the following statements are equivalent:

\begin{enumerate}
\item $(X,f)$ is equicontinuous.

\item $(X,f)$ is a distal.

\item Every point of $X$ is periodic.

\item There is $g\in E(X,f)\setminus \{f^n:\; n\in \Z\}$ which is  a homeomorphism.

\item For all $\varepsilon>0$ there is $l$ such that  $d(x,f^{nl}(x))<\varepsilon$ for all $x\in X$ and all $n\in \nat$ (where $d$ is the metric on $X$).

\item $E(X,f)=\overline{\{f^n:\; n\in\nat\}}$.
\end{enumerate}

Even though our main interest is on countable spaces, when possible, we present the results for arbitrary compact metric spaces.  The following implications hold in general: $(5)\Rightarrow (1)$ (see the proof of Theorem \ref{uniforcasiperiodico}),  $(2)\Rightarrow (6)$ (see Theorem \ref{distal-Iguales-E}), $(3)\Rightarrow (2)$ (see the proof of Theorem \ref{distal}) and $(1)\Rightarrow (4)$ (see the proof of Theorem \ref{uniforcasiperiodico}). On the other hand,  it is well known that in general   $(2)\not\Rightarrow (1)$ (see Example \ref{abelianNonWAP}) and  $(4)\not\Rightarrow (1)$.  In addition,  $(1)\not\Rightarrow (3)$ (for instance a rotation of a circle with an irrational angle). The  fact that  $(4)$ and $(1)$ are equivalent reminds a similar  phenomena observed in dynamical systems on the interval \cite{Szuca} or more  generally on $k$-ods \cite{VidalGarcia2019}. They showed that if $X$ is a $k$-od and $f:X\to X$ is continuous,  then $(X,f)$ is equicontinuous if, and only if, there is  $g\in \overline{\{f^n:\;n\in\nat\}}\setminus\{f^n:\; n\in\nat\}$ which is continuous. We do  not know if $(3)$ implies $(1)$ holds in general (it has been shown for dynamical systems on dendrites \cite[Theorem 4.14]{JavierMichael2019}).

For distal dynamical systems on countable spaces, we give a neat representation of the inverse of an arbitrary element  of $E(X,f)$  in terms of $p$-iterates of $f$ (where $p$ is an ultrafilter over $\nat$). That representation is also valid for arbitrary metric equicontinuous dynamical systems.  This fact serves to give a different proof of a well known result of Ellis saying that $E(X,f)$ is a group if, and only if, $(X,f)$ is distal.  

\section{Preliminaries}
\label{preliminaries}

$\nat$ denotes the set of non negative integers. In this paper, $X$ will always be a compact metric space. A  dynamical system is a pair $(X,f)$ where  $f:X\to X$ is a continuous map and most of the time $f$ will be a homeomorphism.  The {\it orbit} of $x$, denoted by $\mathcal O_f(x)$, is
the set $\{f^n(x):n\in\mathbb N\}$, where $f^n$ is $f$ composed
with itself $n$ times.  When $f$ is a bijection, we also use $\mathcal O^{\mathbb{Z}}_f(x)$ for the  $\Z$-orbit of a point, i.e. the set $\{f^n(x):n\in\mathbb Z\}$. A point $x\in X$ is called a {\it periodic
point} of $f$ if there exists $n\geq 1$ such that $f^n(x)=x$ and the least such $n$ is called the {\em period} of $x$. We denote by $P_f$ the collection of possible periods, i.e. $n\in P_f$ if there is $x$ with period $n$. 

The main results presented are about compact metric countable spaces. Notice that there is some redundancy here, as any compact countable Hausdorff space is necessarily metrizable.  It is well known that every compact metric countable space  is  scattered (i.e. every non empty subset has an isolated point), this follows from the classical Cantor-Bendixson's theorem (see e.g. \cite[30E]{Willard}).   We recall  the definition of the Cantor-Bendixson rank  which is 
very useful to study scattered spaces.   The set of all accumulation points of $X$,
the derivative of $X$, is denoted by $X'$.  For a successor ordinal $\alpha=\beta +1$,  we let
$X^{(\alpha)}=(X^{(\beta)})'$ and for limit ordinal $\alpha$ we
let
 $X^{(\alpha)}=\bigcap_{\beta<\alpha}X^{(\beta)}$. The
{\it Cantor-Bendixson rank} of $X$ is the first ordinal
$\alpha<\omega_1$  such that $X^{(\alpha+1)}=X^{(\alpha)}$; notice that  when $X$ is countable, then $ X^{(\alpha)}=\emptyset$ and   for $X$ compact, $\alpha$ is a successor ordinal. The {\it
Cantor-Bendixson rank} of a point $x\in X$, denoted by $cbr(x)$, is the
first ordinal $\alpha<\omega_1$  such that $x\in X^{(\alpha)}$ and
$x\notin X^{\alpha+1}$.  The cb-rank is clearly a topological invariant.

The Stone-\v{C}ech compactification $\beta(\mathbb N)$ of  $\mathbb N$
with the discrete topology will be identified with the set of
ultrafilters over $\mathbb N$. Its remainder, ${\mathbb N}^*=
\beta({\mathbb N})\setminus \mathbb{N}$,  is  the set of all non principal
ultrafilters on $\mathbb N$, where, as usual, each natural number
$n$ is identified with  the principal ultrafilter consisting of all
subsets of $\mathbb N$ containing $n$. For $A\subseteq \mathbb N$, $A^{\wedge}$  (resp. $A^*$) denotes the collection of all ultrafilter  (resp. non principal ultrafilter)  containing $A$. Given $n\in\N$ and $p\in \beta\nat$, it is a simple observation that there is $0\leq r<n$ such that $p\in (n\N+r)^{\wedge}$, where $n\N=\{nk:\; k\in \N\}$.  This will be used several times in the sequel. 

An important tool  is the notion of the $p$-limit of a sequence respect to an ultrafilter $p$. 
Given $p\in \mathbb{N}^*$ and a sequence
$(x_n)_{n}$ in a space $X$, we say that a point $x\in
X$ is the $p$-{\it limit point} of the sequence, in symbols $x
=p$-$\lim_{n\in\nat}x_n$, if for every neighborhood $V$
of $x$, $\{n\in\mathbb{N}: x_n\in V\} \in p$. Observe that a
point $x\in X$ is an accumulation point of a countable subset
$\{x_n:\,n\in\mathbb{N}\}$ of $X$ if, and only if, there is $p\in \mathbb{N}^*$
such that $x = p$-$\lim_{n\in\nat}x_n$. It is not hard to
prove that each sequence in a compact space always has a $p$-limit
point for every $p\in \mathbb{N}^*$.

The notion of a $p$-limit has been used in different context. Hindman  \cite{Hindman} presents some of the history of this notion and its applications to Ramsey theory  and Blass  \cite{Blass} gives results on the connection of $p$-limits and topological dynamic.  They  formally established
the connection between  ``the iteration in to\-po\-lo\-gi\-cal
dynamics'' and ``the convergence with respect to an ultrafilter''
by considering a more general iteration of the function $f:X\to X$ as
follows. For $p\in\N^*$, the
$p$-iterate of $f$ is the function $f^p: X\rightarrow X$ defined
by 
\[
f^p(x) = \mbox{$p$-$\lim_{n\in\nat} f^n(x)$},
\]
for all $x\in X$. In \cite{GarciaSanchis2007,GarciaSanchis2013} the reader can find several interesting properties of $p$-limits and $p$-iterates.

Our main interest are systems $(X,f)$ where $f$ is a homeomorphism. Nevertheless, we also will  consider the situation when $f$ is just a continuous map.  In this case, the Ellis semigroup  associated to $(X,f)$ is defined as follows:
\[
E(X,f, \N)=\overline{\{f^n:\; n\in \N\}}
\]
where the closure is taken in $X^X$ with the product topology. When $f$ is a homeomorphism, the Ellis semigroup of $(X,f)$ is defined by
\[
E(X,f, \Z)=\overline{\{f^n:\; n\in \Z\}}.
\]
In both cases, the Ellis semigroup is a compact left topological semigroup with composition as the algebraic operation (i.e. for a given $g$ in the semigroup, the function $h\mapsto h\circ g$ is continuous).  
It is evident  that for a homeomorphism $f$ the following holds:
\[
E(X,f, \Z)= E(X,f, \N) \cup E(X, f^{-1}, \N).
\]

A key tool for our work is a representation of $E(X,f, \N)$ in terms of ultrafilters on $\N$ is  (for a proof see, for instance, \cite{GarciaSanchis2013}):
\[
E(X,f, \N)=\{f^p:\; p\in \beta\N \},
\]
\[
f^p\circ f^q=f^{q+p}\text{ for $p,q\in \beta\nat$}.
\]
where the sum of ultrafilter is defined by 
\[
A\in p+q \;\;\text{if}\;\; \{n\in\nat: \{m\in \nat:\; n+m\in A\}\in p\}\in q.
\]
It is elementary to show that $\overline{\mathcal{O}^\Z_f(x)}$ is equal to $\{h(x):\; h\in E(X,f,\Z)\}$. Equivalently,  $y\in \overline{\mathcal{O}^\Z_f(x)}$ if, and only if, $y=f^p(x)$ or $y={(f^{-1})}^p(x)$ for some $p\in \beta\nat$.

It is easy to verify that $(X,f)$ is distal if, and only if,  every function in $E(X,f,\Z)$ is injective. We will use this equivalence  through out the paper.  When $f$ is just a continuous function, we  call the system $(X,f)$  {\em positively distal}, if every function in $E(X,f,\N)$ is injective.  We will show later that, for $X$ countable and  $f$  a homeomorphism,  there is no conflict with those two versions: Every function in  $E(X,f,\N)$ is injective,  if, and only if, every function in $E(X,f,\Z)$ is injective.  

A system $(X,f)$ is {\em WAP} (weakly almost periodic), if every function in $E(X,f,\Z)$  is continuous.   We say that $(X,f)$ is {\em equicontinuous}, if for every $\varepsilon>0$, there is $\delta>0$ such that for all $x, y \in X$ with $d(x,y)<\delta$ we have that $d(f^n(x),f^n(y))<\varepsilon$ for all $n\in \Z$.   It is easy to verify  that an equicontinuous system is also WAP, the converse is not true. As it was already mentioned,  for an arbitrary compact metric spaces, any distal WAP system is  equicontinuous (see \cite{Auslander1988} page 69). 
When  $f$ is not a bijection, the notion of a WAP or equicontinuous system  is defined in terms of $E(X,f,\nat)$. 

We end this section by presenting some technical lemmas about the $p$-iterates that we will use in the sequel. 

\begin{lema}\cite[Lemma 2.2]{GRU2015}
\label{GRU1} Let $X$ be a compact metric space, $f:X\to X$ be a continuous map and $x\in X$.
Suppose $x$ is periodic with period $n$ and $0\leq l<n$. If  $p\in  (n\nat +l)^*$, then  
$f^p(x) = f^l(x)$.
\end{lema}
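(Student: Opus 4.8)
The statement to prove is: if $x$ is periodic with period $n$, $0\leq l<n$, and $p\in(n\nat+l)^*$, then $f^p(x)=f^l(x)$. The plan is to argue directly from the definition of the $p$-limit. Since $x$ has period $n$, the orbit sequence $(f^k(x))_{k\in\nat}$ is eventually (in fact everywhere) periodic with period $n$: for every $k$ we have $f^k(x)=f^{k\bmod n}(x)$. In particular $f^k(x)=f^l(x)$ precisely when $k\equiv l \pmod n$, i.e. when $k\in n\nat+l$.

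The key step is then to unwind what $f^p(x)=p\text{-}\lim_{k}f^k(x)$ means: for every neighborhood $V$ of the candidate limit point, the set $\{k\in\nat:f^k(x)\in V\}$ must belong to $p$. Take the candidate to be $f^l(x)$ and let $V$ be any neighborhood of $f^l(x)$. Since $n\nat+l\subseteq\{k:f^k(x)=f^l(x)\}\subseteq\{k:f^k(x)\in V\}$ and $n\nat+l\in p$ by hypothesis (as $p\in(n\nat+l)^*$), the superset $\{k:f^k(x)\in V\}$ also lies in $p$ because $p$ is a filter and hence closed under supersets. Thus $f^l(x)$ satisfies the defining condition of the $p$-limit, so $f^l(x)=p\text{-}\lim_k f^k(x)=f^p(x)$. (One may also invoke uniqueness of $p$-limits in a Hausdorff space, but it is not strictly needed once we have exhibited $f^l(x)$ as a $p$-limit.)

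There is essentially no obstacle here; the only thing to be slightly careful about is the convention $0\in\nat$ adopted in the Preliminaries, so that $n\nat+l=\{nk+l:k\in\nat\}$ really does meet every tail of $\nat$ and genuinely belongs to a nonprincipal ultrafilter $p$ containing it — this is exactly why the hypothesis $p\in(n\nat+l)^*$ is the natural one. Since this is cited as \cite[Lemma 2.2]{GRU2015}, I would keep the proof to the two or three lines above: unwind the definition of $p$-$\lim$, use periodicity to identify $\{k:f^k(x)\in V\}$ as a superset of $n\nat+l$, and conclude by the upward closure of the ultrafilter $p$.
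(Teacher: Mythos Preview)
Your proof is correct: unwinding the definition of the $p$-limit and using that $n\nat+l\subseteq\{k:f^k(x)\in V\}$ for every neighborhood $V$ of $f^l(x)$ gives the result immediately. The paper does not include its own proof of this lemma---it is simply quoted from \cite[Lemma~2.2]{GRU2015}---so there is nothing to compare against beyond noting that your argument is the natural (and essentially only) one.
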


It is known that $E(X,f,\Z)$ is abelian when it is WAP (see \cite{Auslander1988} page 55). 
We include a sketch  of the proof in terms of $p$-iterates for the sake of completeness. 

\begin{lema}
\label{p-lim}
Let $X$ be a compact metric space, $h,g:X\to X$ be continuous maps, $(x_n)_n$ be a sequence in $X$ and $p\in \beta\N$. 
\begin{itemize}
\item[(i)]  $h(p$-$\lim_n x_n)=p$-$\lim_n h(x_n)$.
\item[(ii)] If $h\circ g=g\circ h$, then  $h \circ g^p=g^p\circ h$.
\item[(iii)] If $f^p$ is continuous and $f\circ g=g\circ f$, then $f^p\circ g^q=g^q\circ f^p$ for every $q\in\beta\N$.

\item[(iv)] If $(X,f)$ is WAP, then $E(X,f,\nat)$ is abelian. The same holds for $E(X,f,\Z)$ when $f$ is a homeomorphism.
\end{itemize}
\end{lema}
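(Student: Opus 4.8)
The plan is to prove (i) directly from the definition of the $p$-limit together with continuity, and then derive (ii), (iii) and (iv) by bootstrapping, so that (i) carries essentially all the work. For (i), set $x=p\text{-}\lim_n x_n$ and let $V$ be an arbitrary neighborhood of $h(x)$. By continuity of $h$, the set $h^{-1}(V)$ is a neighborhood of $x$, so $\{n:x_n\in h^{-1}(V)\}\in p$; since this set equals $\{n:h(x_n)\in V\}$, we get $\{n:h(x_n)\in V\}\in p$. As $X$ is metric (hence Hausdorff), $p$-limits are unique, and therefore $h(x)=p\text{-}\lim_n h(x_n)$.

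For (ii), apply (i) to the sequence $x_n=g^n(x)$: since $h\circ g=g\circ h$ we have $h(g^n(x))=g^n(h(x))$ for every $n$, whence
\[
h(g^p(x))=h\big(p\text{-}\lim_n g^n(x)\big)=p\text{-}\lim_n h(g^n(x))=p\text{-}\lim_n g^n(h(x))=g^p(h(x)),
\]
which is the claimed identity $h\circ g^p=g^p\circ h$. For (iii), I would apply (ii) twice. First, with the two maps interchanged (taking ``$h$''$=g$ and ``$g$''$=f$), the hypothesis $f\circ g=g\circ f$ yields $g\circ f^p=f^p\circ g$, i.e.\ $f^p$ commutes with $g$. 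Since $f^p$ is continuous by hypothesis, a second application of (ii), now with ``$h$''$=f^p$, ``$g$''$=g$ and ``$p$''$=q$, gives $f^p\circ g^q=g^q\circ f^p$.

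For (iv), recall that WAP means every element of $E(X,f,\Z)$ is continuous, so in particular $f^p$ is continuous for all $p\in\beta\N$. Taking $g=f$ in (iii) (the hypothesis $f\circ f=f\circ f$ being trivial) gives $f^p\circ f^q=f^q\circ f^p$ for all $p,q\in\beta\N$, i.e.\ $E(X,f,\N)$ is abelian. When $f$ is a homeomorphism, $E(X,f,\Z)=E(X,f,\N)\cup E(X,f^{-1},\N)$, every element of both pieces is continuous, and $f$ commutes with $f^{-1}$; applying (iii) with $g=f^{-1}$ gives $f^p\circ(f^{-1})^q=(f^{-1})^q\circ f^p$, and the remaining cross-commutations follow symmetrically, so $E(X,f,\Z)$ is abelian as well.

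Since the whole argument reduces to (i), I expect no real obstacle; the only points that deserve care are the bookkeeping of which map plays the role of ``$h$'' and which of ``$g$'' in the two applications of (ii) needed for (iii), and the observation in (iv) that WAP is precisely what supplies the continuity of $f^p$ required to invoke (iii).
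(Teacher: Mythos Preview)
Your proof is correct and follows essentially the same route as the paper's: (i) is proved directly from continuity, (ii) is (i) applied to the orbit sequence, (iii) is two applications of (ii) with the roles swapped exactly as you describe, and (iv) is (iii) with $g=f$. The paper leaves (i) to the reader and for (iv) just says ``follows from (iii)''; your added care in spelling out the $\Z$-case of (iv) via the decomposition $E(X,f,\Z)=E(X,f,\N)\cup E(X,f^{-1},\N)$ and the cross-commutation $f^p\circ(f^{-1})^q=(f^{-1})^q\circ f^p$ is a welcome clarification but not a departure from the paper's approach.
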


\proof (i) It is elementary and left to the reader.  (ii) Let $x\in X$, we have
$$
h(g^p(x))=h(p\mbox{-$\lim_m g^m(x))=p$-$\lim_m h(g^m(x))=p$-$\lim_n g^m(h(x))=g^p(h(x))$}.
$$
(iii) Since $g$ is continuous, from (ii) we have that $ g\circ f^p =f^p\circ g$. Since $f^p$ is continuous,  by (ii) 
$f^p\circ g^q=g^q\circ f^p$ for every $q\in \beta\N$. (iv) follows from (iii).
\endproof

We present below an example  of a non abelian Ellis semigroup. 

\begin{ejem}\label{ejem1}
{\em Let $X=\mathbb{Z}\cup\{-\infty, +\infty\}$ with the topology where $n\to +\infty$ and $-n\to -\infty$, i.e, $X$ is the union of two convergent sequences.  Consider the function given by $f(\pm \infty)= \pm \infty$ and  $f(n)=n+1$ for all $n\in \mathbb{Z}$.  Let $f^+\in X^X$ be the function defined by $f^+(x)=+\infty$ for all $x\neq -\infty$ and  $f^+(-\infty)=-\infty$. Analogously we define $f^-\in X^X$ interchanging $+\infty$  and  $-\infty$. 
Then $f^p= f^{+}$ and $(f^{-1})^p= f^{-}$ for every non principal ultrafilter $p$ on $\nat$.   
Hence
\[
E(X,f,\Z)=\{f^n:\;n\in \N\}\cup \{f^+,f^-\}.
\]
We have that $f^{+}\circ f^{-}\neq f^{-}\circ f^{+}$.
}

\end{ejem}

In Example \ref{abelianNonWAP} we provide a non WAP system with an abelian enveloping semigroup.

\section{Distal dynamical systems}
Now we are going to show a key result to understand equicontinuity on countable spaces. 

\begin{teo}
\label{distal}
Let $X$ be a compact metric countable space. 
Let $f:X\to X$ be a homeomorphism. The following are equivalent. 
\begin{itemize}
\item[(i)] $(X,f)$ is  positively distal.
\item[(ii)] Every point of $X$ is periodic.  
\end{itemize}
\end{teo}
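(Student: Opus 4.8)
The plan is to prove the two implications separately, with the direction $(ii)\Rightarrow(i)$ being the routine one and $(i)\Rightarrow(ii)$ being the substantive part where the countable-scattered structure of $X$ is essential.

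For $(ii)\Rightarrow(i)$: Assume every point of $X$ is periodic. I want to show every $g\in E(X,f,\N)$ is injective. Write $g=f^p$ for some $p\in\beta\N$. Take $x,y$ with $f^p(x)=f^p(y)$; let $n_x,n_y$ be their periods. By the observation recorded in the Preliminaries, there is $0\le r<n_x$ with $p\in(n_x\N+r)^{\wedge}$, and similarly $0\le s<n_y$ with $p\in(n_y\N+s)^{\wedge}$. Then Lemma \ref{GRU1} gives $f^p(x)=f^r(x)$ and $f^p(y)=f^s(y)$ (taking care of the principal-ultrafilter case trivially, since then $f^p$ is a genuine power of $f$, hence a bijection). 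So $f^r(x)=f^s(y)$; applying the homeomorphism $f^{-r}$ (or arguing inside the finite orbit) shows $x$ and $y$ lie in the same finite $f$-orbit and in fact $x=f^{s-r}(y)$ after adjusting indices mod the common structure — the point being that $f^p$ restricted to each (finite, $f$-invariant) orbit is a power of $f$ on that orbit, hence a bijection of it, and distinct orbits have disjoint images. Hence $x=y$. This shows $(X,f)$ is positively distal.

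For $(i)\Rightarrow(ii)$: this is the heart of the matter and I would argue by contradiction using the Cantor--Bendixson rank. Suppose $(X,f)$ is positively distal but some point is not periodic. First, note the set of periodic points need not be all of $X$; let me look at the non-periodic points. The key structural fact I would try to exploit: $f$ permutes the Cantor--Bendixson derivatives $X^{(\alpha)}$ (since $cbr$ is a topological invariant and $f$ is a homeomorphism, $f(X^{(\alpha)})=X^{(\alpha)}$), and $X^{(\alpha)}$ is finite for $\alpha$ the last level (the "top" of the scattered space, since $X$ is compact and countable so $X^{(\beta)}$ is finite nonempty for $\beta$ the CB-rank minus one). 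On a finite invariant set every point is automatically periodic. So the idea is to induct downward on the CB-rank: if all points of rank $>\alpha$ are periodic, show all points of rank $\alpha$ are periodic, using positive distality. Take $x$ with $cbr(x)=\alpha$ and suppose its orbit $\mathcal O_f(x)$ is infinite. Then $\mathcal O_f(x)$ has an accumulation point $z$, and $z\in X^{(\alpha+1)}$ has rank strictly bigger, so by induction $z$ is periodic, say of period $n$. Pick a non-principal ultrafilter $p$ with $z=p\text{-}\lim_k f^k(x)$, and refine $p$ so that $p\in(n\N+r)^{\wedge}$ for a fixed $r$. Now I would apply $f^p$: on one hand $f^p(z)=f^r(z)$ by Lemma \ref{GRU1}; on the other hand consider $f^p(x)$ and compare $f^p$ values along the orbit to manufacture two distinct points with the same image under some element of $E(X,f,\N)$, contradicting injectivity. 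Concretely, the orbit of $x$ accumulates at the finite orbit of $z$, and the collapsing behavior of $f^p$ near that finite orbit should force $f^p$ to send two points of $\overline{\mathcal O_f(x)}$ to the same place. The cleanest route: since $\overline{\mathcal O_f(x)}$ is an infinite compact invariant set with $f^p$ restricted to it continuous-or-not, and positive distality forces $f^p$ injective on all of $X$, in particular injective on the countable closed set $\overline{\mathcal O_f(x)}$; but an infinite orbit closure accumulating on a finite orbit, together with Lemma \ref{GRU1} pinning down $f^p$ on that finite orbit, leaves too little room for injectivity — this is the contradiction to extract carefully.

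The main obstacle I anticipate is making the last contradiction precise: extracting, from an infinite orbit accumulating on a periodic (finite) orbit together with the rigidity given by Lemma \ref{GRU1}, an explicit pair of distinct points collapsed by a single $f^p$. I expect the right tool is to choose the ultrafilter $p$ cleverly — splitting $\N$ according to residues mod $n$ (the period of the accumulation point) and according to how fast $f^k(x)$ approaches the finite orbit — so that $f^p$ is forced to map both $x$'s accumulation point $z$ and some other point of the orbit closure to $f^r(z)$; once two points have equal $f^p$-image, injectivity of that element of $E(X,f,\N)$ fails, contradicting $(i)$. An alternative, possibly smoother, organization avoids the CB induction: directly take any $x$ with infinite orbit, let $z$ be an accumulation point of $\mathcal O_f(x)$, and play off $f^p$ (for $p$ with $z=p\text{-}\lim f^k(x)$) against $f$ itself to show $f^p(x)=f^p(f(x))$ or some similar collapse — but that seems to still need control of $z$'s periodicity, hence the downward induction on rank seems unavoidable, and that is where the argument must be handled with care.
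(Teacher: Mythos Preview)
Your plan is sound and tracks the paper's argument closely. For $(ii)\Rightarrow(i)$ your observation that $f^p$ maps each finite periodic orbit bijectively to itself (acting as $f^r$ on an orbit of period $n$, where $p\in(n\N+r)^{\wedge}$) and therefore is globally injective is correct; the paper argues the same point via Lemma~\ref{p-lim}(ii), but your formulation is equally clean.

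For $(i)\Rightarrow(ii)$ the paper sidesteps your transfinite downward induction: rather than carrying the hypothesis ``all points of rank $>\alpha$ are periodic'', it passes once to $Y=\overline{\mathcal{O}_f(x)}$ for a non-periodic $x$ and invokes Lemma~\ref{cbrank} to see that the points of \emph{maximal} $Y$-rank form a finite $g$-invariant set and are therefore periodic. Your induction also works (organised as well-founded induction on the reverse of the rank order: any accumulation point of $\mathcal{O}_f(x)\subseteq X^{(\alpha)}$ lies in $X^{(\alpha+1)}$), and both routes arrive at the same endgame.

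The explicit collision you were reaching for is simple once seen, and no extra cleverness in the choice of $p$ is required. With $z=f^p(x)$ periodic of period $n$ and $p\in(n\N+r)^*$, set $y=f^{n-r}(z)$; then Lemma~\ref{GRU1} gives
\[
f^p(y)=f^r(y)=f^{n}(z)=z=f^p(x),
\]
while $y\neq x$ because $cbr(y)=cbr(z)>cbr(x)$. Equivalently---and this is the variant you were gesturing at---Lemma~\ref{p-lim}(ii) gives $f^p(f^r(x))=f^r(f^p(x))=f^r(z)=f^p(z)$ with $f^r(x)\neq z$ by the same rank comparison. Either pair contradicts positive distality.
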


To put this result  in a proper context, we need to recall some fundamental notions about topological dynamics. Let $(X,f)$ be a dynamical system with $X$ compact metric and $f:X\to X$ a homeomorphism. 
A subset $A\su X$ is {\em minimal}, if $A$ is closed non empty, $f[A]\su A$ and  whenever $B\su A$ is closed, non empty and $f[B]\su B$, then $A=B$. By  the compactness of $X$ and by the usual Zorn's Lemma argument, there are minimal  sets (see \cite{Auslander1988}). 
A subset $A$ of $\mathbb{Z}$ is said to be  {\em syndetic}, if there is a finite set $K\subset \mathbb{Z}$ such that
$\mathbb{Z}=A+K=\{a+k: a\in A, k\in K\}$. We say that $x\in X$  is an {\em almost periodic} point if for every neighborhood $U$ of $x$, there is a syndetic set $A$ such that $f^n(x)\in  U$ for all $n\in A$.  A point is almost periodic if, and only if, the closure of its orbit is a minimal set (see \cite{Auslander1988} page 11). Every  point of a distal system  $(X,f)$ has to be almost periodic (see \cite{Auslander1988} page 68).   A characterization of distality is as follows:  $(X,f)$ is distal if, and only if, every point of $(X\times X, f)$ is almost periodic where the product flow is defined coordinate wise  (see  \cite{Auslander1988} page 69). It follows  that every point of $X$ is periodic if, and only if, $(X,f)$ is distal and every minimal set is finite. On the other hand, equicontinuity corresponds to a stronger form of almost periodicity. A system $(X, f)$ is {\em uniformly almost periodic}, if for every $\varepsilon>0$ there is a syndetic set $A$ such that $d(x,f^n(x))<\varepsilon$ for all $x\in X$ and $n\in A$. Then $(X,f)$ is equicontinuous if, and only if, it is uniformly almost periodic (see \cite{Auslander1988} page 36).  We will see later that for equicontinuous systems on countable spaces the syndetic set $A$ is equal to the collection of multiples of a natural number (see Theorem \ref{uniforcasiperiodico}). 

We remark that Theorem \ref{distal} only holds for  $X$  countable. We start by showing that in countable spaces the only minimal sets are the periodic orbits. 

\begin{lema}
\label{minimal}
Let $X$ be a compact metric countable space. 
If $f:X\to X$ is a continuous map and $A\su X$ is minimal, then $A$ is the orbit of a periodic point. 
\end{lema}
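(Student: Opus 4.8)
The plan is to exploit the Cantor--Bendixson structure of the compact metric countable space $X$ together with minimality. First I would recall that $A$, being closed in $X$, is itself a compact metric countable space, hence scattered with some finite Cantor--Bendixson rank; say $A^{(\alpha)}$ is finite and nonempty for the top level $\alpha$ (for $A$ compact the top derivative is a nonempty finite set). The key observation is that the derived-set operation commutes with continuous surjections in the right direction: since $f$ is continuous and $f[A]\subseteq A$, and in fact by minimality $f[A]=A$ (note $f[A]$ is closed, nonempty, and $f[f[A]]\subseteq f[A]$, so minimality forces $f[A]=A$), the homeomorphic-invariance of cb-rank --- or at least the fact that a continuous bijection of a compact Hausdorff space is a homeomorphism --- gives $f[A^{(\beta)}]=A^{(\beta)}$ for every ordinal $\beta$. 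In particular the top level $F:=A^{(\alpha)}$ is a finite nonempty set with $f[F]=F$, so $f$ restricted to $F$ is a permutation of a finite set; every point of $F$ is therefore periodic, and the orbit of any such point is a finite (hence closed) nonempty $f$-invariant subset of $A$.

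Now minimality finishes the argument: if $x\in F$ has orbit $\mathcal{O}_f(x)$, then $\overline{\mathcal{O}_f(x)}=\mathcal{O}_f(x)$ is closed, nonempty, and mapped into itself by $f$, so by minimality of $A$ we get $A=\mathcal{O}_f(x)$. Thus $A$ is exactly the (finite) orbit of the periodic point $x$, which is what we wanted.

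The step I expect to be the main obstacle is justifying that $f$ carries each Cantor--Bendixson level of $A$ onto itself, i.e. that $f$ behaves like a homeomorphism on $A$ even when the hypothesis only says $f$ is continuous on $X$. The clean way around this is: $f[A]=A$ was shown above from minimality, and $A$ is compact metric; a continuous bijection from a compact space to a Hausdorff space is automatically a homeomorphism, but here $f\restriction A$ need not be injective a priori. However, one does not actually need injectivity of $f$ on all of $A$: it suffices to observe that a continuous surjection $g\colon A\to A$ of a compact scattered space satisfies $g[A']\subseteq A'$ is false in general, so instead I would argue directly that the finite set $A^{(\alpha)}$ is preserved. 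Concretely, since $A$ is minimal and $A^{(\alpha)}$ is a nonempty closed subset, and $\overline{\mathcal{O}_f(y)}$ for $y\in A^{(\alpha)}$ is a closed invariant nonempty subset of $A$, minimality gives $\overline{\mathcal{O}_f(y)}=A$; but $\overline{\mathcal{O}_f(y)}$ is a countable compact set and one checks its cb-rank cannot exceed that of the orbit of a single point unless the orbit accumulates, forcing, after a short argument, that $\mathcal{O}_f(y)$ is finite. Once the orbit of some point is finite, the final minimality step above applies verbatim. I would present whichever of these two routes (derived-set invariance versus direct orbit-closure argument) turns out to be shorter in the write-up.
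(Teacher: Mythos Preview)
Your overall strategy—find a periodic point in $A$ and then invoke minimality to conclude $A$ equals that finite orbit—matches the paper's. The gap is in producing the periodic point: neither of your two routes is complete.

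Route 1 needs $f[A^{(\beta)}]=A^{(\beta)}$ for each $\beta$, but as you yourself note, $f\restriction A$ need not be injective, and continuous surjections of compact scattered spaces do not in general preserve Cantor--Bendixson levels (a non-isolated point can map to an isolated one once injectivity fails). Minimality does not obviously repair this, and you do not supply a fix. Route 2 is too vague to evaluate: the clause ``its cb-rank cannot exceed that of the orbit of a single point unless the orbit accumulates, forcing, after a short argument, that $\mathcal O_f(y)$ is finite'' is not an argument, and I do not see how to turn it into one—$\overline{\mathcal O_f(y)}=A$ by minimality, so its cb-rank is exactly that of $A$, which tells you nothing about whether $\mathcal O_f(y)$ is finite.

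The paper's route is different and much shorter. Assume for contradiction that some $x\in A$ has infinite orbit; then $A=\overline{\mathcal O_f(x)}$. Since $A$ is scattered, pick $y$ \emph{isolated} in $A$; an isolated point of $\overline{\mathcal O_f(x)}$ must lie in $\mathcal O_f(x)$, so $y=f^k(x)$ for some $k$, and $y$ is not periodic (else $\mathcal O_f(x)$ would be finite). Hence $y\notin\mathcal O_f(f(y))$, and since $y$ is isolated in $A$ it is not in the closure either. Thus $B:=\overline{\mathcal O_f(f(y))}$ is a nonempty closed $f$-invariant \emph{proper} subset of $A$, contradicting minimality. The key idea you are missing is to work with an isolated point of $A$ rather than a point of maximal cb-rank; this sidesteps the injectivity problem entirely.
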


\proof Let $A$ be  a minimal set and suppose there is $x\in A$ with an infinite orbit. Then $A=\cl{{\mathcal O}_f(x)}$.  Since $X$ is scattered, there is $y\in A$ isolated in $A$ and necessarily $y\in {\mathcal O}_f(x)$. Let $B=\cl{{\mathcal O}_f(f(y))}$, then $B\su A$ is a nonempty closed invariant  set, $y\not\in B$  and thus $B\neq A$, a contradiction.  
\endproof

Let $(X,f)$ be a countable dynamical system with $f$ a homeomorphism. The considerations above imply  that any almost periodic point of $(X\times X, f)$ is periodic and therefore, for a distal system $(X,f)$,  every point of $X$ is periodic. This  provides a proof of Theorem \ref{distal}, nevertheless, we present a direct argument  that  uses neither minimal sets nor almost periodic points. 

\begin{lema}
\label{cbrank}  Let $X$ be a compact metric countable space. 
The collection $M$ of points in $X$ with maximal cb-rank is finite and non empty. Moreover, if $f:X\to X$ is a homeomorphism, then $f(M)=M$ and every point in $M$ is periodic.  
\end{lema}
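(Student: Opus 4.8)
The plan is to identify $M$ with the top non-empty Cantor--Bendixson level of $X$ and then exploit finiteness. Writing the Cantor--Bendixson rank of $X$ as $\beta+1$ (a successor ordinal, by the facts recalled in Section \ref{preliminaries}), we have $X^{(\beta)}\neq\emptyset$ while $X^{(\beta+1)}=(X^{(\beta)})'=\emptyset$. Then $X^{(\beta)}$ is a closed, hence compact, subset of $X$ having no accumulation points, so it is discrete and therefore finite. Moreover, since $X^{(\gamma)}\su X^{(\beta+1)}=\emptyset$ for every $\gamma>\beta$, the ordinal $\beta$ is the largest cb-rank occurring in $X$, and a point $x$ satisfies $cbr(x)=\beta$ exactly when $x\in X^{(\beta)}\setminus X^{(\beta+1)}=X^{(\beta)}$. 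Hence $M=X^{(\beta)}$, which already gives the first assertion: $M$ is finite and non-empty.

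For the second assertion I would use that a homeomorphism commutes with the Cantor--Bendixson derivative: for any $A\su X$ one has $f[A']=(f[A])'$, and, being a bijection, $f$ also commutes with arbitrary intersections. A straightforward transfinite induction (base case $f[X]=X$; successor step via $f[A']=(f[A])'$; limit step via intersections) then yields $f[X^{(\gamma)}]=X^{(\gamma)}$ for every $\gamma$, and in particular $f[M]=f[X^{(\beta)}]=X^{(\beta)}=M$. Equivalently, one may simply invoke that $cbr$ is a topological invariant, so $cbr(f(x))=cbr(x)$ for all $x$, whence $x\in M\iff f(x)\in M$, and $f(M)=M$ since $f$ is onto.

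Finally, $M$ is finite and the restriction of $f$ to $M$ is a bijection of $M$ onto itself, i.e.\ a permutation of a finite set; some positive power of that permutation is the identity on $M$, so every point of $M$ is periodic. This lemma is essentially routine; the only steps deserving a line of care are the identification of ``maximal cb-rank'' with the last non-empty derivative $X^{(\beta)}$, and the (standard) verification that a homeomorphism commutes with the operation of taking derivatives.
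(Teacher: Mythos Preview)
Your argument is correct and essentially identical to the paper's own proof: both identify $M$ with the last non-empty Cantor--Bendixson derivative $X^{(\beta)}$, use compactness to conclude it is finite, invoke invariance of the cb-rank under homeomorphisms to get $f[M]=M$, and deduce periodicity from $f\restriction M$ being a permutation of a finite set. Your write-up is a bit more detailed (you spell out the transfinite-induction alternative for $f[X^{(\gamma)}]=X^{(\gamma)}$), but there is no substantive difference.
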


\proof Since $X$ is a countable compact metric space, its Cantor-Bendixson rank is a successor ordinal, that is to say, there is an ordinal  $\alpha$ such that  $X^{(\alpha)}\neq \emptyset$ and $X^{(\alpha+1)}=\emptyset$. So $M=X^{(\alpha)}$. By compactness $M$ is finite, otherwise, $M$ would have  an accumulation point which would belong to $X^{(\alpha+1)}$. Suppose now that $f:X\to X$  is a homeomorphism.  Since the cb-rank of a point is preserved under homeomorphism,  $f[M]= M$. Since $M$ is  a finite set,  every point in $M$ is periodic.   
\endproof

\begin{lema}
\label{no-distal} 
Let $X$ be a compact metric countable space and  $f:X\to X$ be a homeomorphism.  If there is  a point with dense orbit, then  $(X,f)$ is not positively distal.
\end{lema}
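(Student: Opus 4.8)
The plan is to derive a contradiction from the existence of a point $x_0$ with dense $\Z$-orbit, assuming $(X,f)$ is positively distal. Since $X$ is a compact metric countable space, it is scattered, so it has isolated points; in fact $X\setminus X'$ is a nonempty open dense set. The key observation is that if the orbit $\mathcal{O}^{\Z}_f(x_0)$ is dense, then in particular it meets the open set of isolated points, so some $f^k(x_0)$ is isolated in $X$, hence (replacing $x_0$ by $f^k(x_0)$, which also has dense orbit) we may assume $x_0$ itself is isolated. Now by Lemma \ref{cbrank} the set $M$ of points of maximal cb-rank is finite, nonempty, $f$-invariant, and consists of periodic points; pick $z\in M$ with period, say, $m$, and note $z$ is not isolated (its cb-rank is maximal and $\ge 1$ since $X$ is infinite). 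The strategy is to produce two distinct points with the same image under some element of $E(X,f,\N)$, contradicting positive distality via the characterization that $(X,f)$ is positively distal iff every function in $E(X,f,\N)$ is injective.

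The main step is to exploit the tension between $x_0$ being isolated with dense orbit and $z\in M$ being a periodic accumulation point. Since $\mathcal{O}^{\Z}_f(x_0)$ is dense and $z$ is an accumulation point of $X$, $z$ is an accumulation point of the orbit; so there is a non-principal ultrafilter $p\in\N^*$ with $z = f^p(x_0)$ or $z=(f^{-1})^p(x_0)$. Assume the first case (the other is symmetric, working with $f^{-1}$, whose system is also positively distal). Decomposing $p$ according to residues mod $m$, we may refine to assume $p\in(m\N+r)^\wedge$ for some fixed $0\le r<m$. Now I want to show $f^p$ is not injective. Consider the point $z' = f^{p}(z)$. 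Because $z$ is periodic of period $m$ and $p\in (m\N+r)^*$, Lemma \ref{GRU1} gives $f^p(z)=f^r(z)$, which is again a periodic point of maximal cb-rank. Meanwhile $f^p(x_0)=z$. The goal is to locate a point $w\ne x_0$ with $f^p(w)=z$ as well: since $z$ has cb-rank $\alpha$ (the maximum) and is an accumulation point, and since $f^p$ sends the isolated point $x_0$ to $z$, a cardinality/rank counting argument on the fibers of $f^p$ should force the fiber $(f^p)^{-1}(z)$ to be infinite (or at least non-trivial), because $f^p$ cannot be a continuous injection collapsing the non-compact-looking structure near the isolated orbit onto a single point of $M$ while remaining injective on all of $X$.

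More concretely, the cleanest route: the orbit closure of $z$ under $f^p$ and the orbit of $x_0$ together show that the map $n\mapsto f^n(x_0)$ clusters at $z$ along a set in $p$; by taking a further refinement, infinitely many of the $f^n(x_0)$ lie in a small neighborhood of $z$. Among these, since $z$ has maximal cb-rank and the points $f^n(x_0)$ have cb-rank $0$ (they are in the orbit of an isolated point, hence all isolated, since cb-rank is a topological invariant and $f$ is a homeomorphism), we can extract a sequence $(f^{n_i}(x_0))_i$ converging to $z$; but then for a suitable ultrafilter $q$ refining this sequence, $f^q(x_0)=z$ and also $f^q$ of any accumulation point $y$ of $\{f^{n_i}(x_0):i\in\N\}$ inside $X$ — and there is such a $y\ne x_0$ since that set is infinite in a compact space — will be forced to equal $z$ as well by continuity-type arguments on $p$-limits (Lemma \ref{p-lim}(i)), giving $f^q(x_0)=f^q(y)=z$ with $x_0\ne y$, contradicting injectivity of $f^q\in E(X,f,\N)$.

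\textbf{Expected main obstacle.} The delicate point is the last step: manufacturing the second preimage $y\ne x_0$ of $z$ under a single ultrafilter iterate, rather than under two different ones. One must choose the ultrafilter $q$ so that it simultaneously witnesses $f^q(x_0)=z$ and $f^q(y)=z$; this requires picking $y$ first as a $p'$-limit of a subsequence of the orbit for some auxiliary ultrafilter $p'$, then arranging $q$ to be compatible. The bookkeeping with ultrafilter sums ($p+q$) and the decomposition into arithmetic progressions $m\N+r$ is where the real care is needed, and one should be careful that $y$ does not accidentally coincide with $x_0$ — this is ensured precisely because $x_0$ is isolated (so $x_0$ is not a limit of any subsequence of its own orbit other than the eventually-constant ones, which are excluded since the orbit is infinite by density of the orbit in the infinite space $X$).
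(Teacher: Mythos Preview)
Your setup matches the paper's: you reduce to $x_0$ isolated, pick $z\in M$ periodic of period $m$, find $p\in\nat^*$ with $f^p(x_0)=z$, and compute $f^p(z)=f^r(z)$ via Lemma~\ref{GRU1} with $p\in(m\nat+r)^*$. The gap is in producing the second preimage. Your candidate $y$ is an accumulation point of $\{f^{n_i}(x_0):i\in\nat\}$ where $f^{n_i}(x_0)\to z$; but a convergent sequence has exactly one accumulation point, so necessarily $y=z$. You then need $f^q(z)=z$, which by Lemma~\ref{GRU1} forces $q\in(m\nat)^*$, and nothing in your choice of $q$ (any ultrafilter containing $\{n_i\}$) guarantees this. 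Your appeal to Lemma~\ref{p-lim}(i) does not help either: that lemma lets you pull a \emph{continuous} map through a $p$-limit, but $f^q$ is not known to be continuous --- indeed we are precisely not assuming WAP. The ``bookkeeping with ultrafilter sums'' you flag as the obstacle is not merely delicate; as stated, the argument does not close.

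The fix is much simpler and already within reach of the tools you have assembled. Take $y=f^{m-r}(z)$: this lies in $M$ (hence is not isolated, so $y\ne x_0$), has period $m$, and Lemma~\ref{GRU1} gives $f^p(y)=f^r(f^{m-r}(z))=f^m(z)=z=f^p(x_0)$. Thus $f^p$ itself is non-injective, and no auxiliary ultrafilter or accumulation-point argument is needed. This is exactly the paper's proof. (A minor aside: you phrase the hypothesis in terms of the $\Z$-orbit and propose handling the case $z=(f^{-1})^p(x_0)$ ``symmetrically'' via $(X,f^{-1})$. The lemma, however, concerns the positive orbit, so only the first case arises; and even if it did not, showing $(X,f^{-1})$ fails positive distality would not yield the conclusion for $(X,f)$ without Corollary~\ref{distal2}, which depends on this very lemma.)
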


\proof Let $M$  be the set of points in $X$ with maximal cb-rank. By Lemma \ref{cbrank}, $M$ is a non empty finite set  and all its points  are periodic. Suppose the orbit of $x$ is dense. Then $x$ is isolated and has cb-rank equal to $0$. We will show that there is $y\in M$ and $p\in \beta\N$ such that $f^p(y)=f^p(x)$.  Let $d\in M$, then there is $p\in \beta\N$ such that $d=f^p(x)$. Let $n$ be the  period of $d$. Let $0\leq r<n$ be such that $p\in (n\N+r)^*$. Notice that  $f^{n-r}(d)$ has period $n$, thus  by Lemma \ref{GRU1}  we have $f^p(f^{n-r}(d))=f^r(f^{n-r}(d))=f^n(d)=d$. Let $y=f^{n-r}(d)$. Since $y\in M$,  $y$ is not isolated and thus $y\neq x$. Hence $f^p$ is not injective and thus $(X,f)$ is not positively distal. 

\endproof

\noindent {\em Proof of Theorem \ref{distal}:}  Suppose (i) holds and (ii) does not. Let $x$ be a non periodic point. Since $f$ is injective, the orbit of $x$ is infinite. Let $Y=\overline{\mathcal{O}_f(x)}$ and $g:Y\to Y$ be the restriction of $f$ to $Y$.  Since $(X,f)$ is positively distal, $(Y,g)$ is also positively distal (just observe that $\inf \{d(g^n(x), g^n(y)):\; n\in \Z\}>0$ for all $x,y\in Y$). But $x$ has a dense orbit in $Y$ which contradicts Lemma \ref{no-distal}. 

The converse implication is known, but we present a proof for the sake of completeness. Suppose every point is periodic. Let $x,y\in X$ and $p\in\beta\N$ be such that $f^p(x)=f^p(y)$. Since the orbit of $x$ and $y$ are finite, necessarily $\mathcal{O}_f(x)=\mathcal{O}_f(y)$.  Let $n$ be the period of $x$ and $0\leq r<n$ be such that $y=f^r(x)$. By Lemma \ref{p-lim}(ii) (with $h=f^r$ and $g=f$), $f^p(y)=f^p(f^r(x))=f^r(f^p(x))$. Since we assumed $f^p(x)=f^p(y)$,  we have $f^r(f^p(x))=f^p(x)$ and therefore $r=0$, as  $f^p(x)$ has period $n$.  Hence $x=y$. 

\qed

If $f$ is a homeomorphism and every point is periodic, then by Theorem \ref{distal},  $(X,f^{-1})$ is positively distal. So we have the following 

\begin{coro}
\label{distal2}
Let $X$ be a compact metric countable space. 
Let $f:X\to X$ be a homeomorphism.  $(X,f)$ is positively distal if, and only if, it is distal. 
\end{coro}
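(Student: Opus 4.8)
The plan is to reduce everything to Theorem \ref{distal} together with the decomposition $E(X,f,\Z)=E(X,f,\N)\cup E(X,f^{-1},\N)$ recorded in Section \ref{preliminaries}. One direction is immediate and needs no countability: since $E(X,f,\N)\subseteq E(X,f,\Z)$, if every function in $E(X,f,\Z)$ is injective then in particular every function in $E(X,f,\N)$ is injective, so a distal system is always positively distal.

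For the nontrivial direction, suppose $(X,f)$ is positively distal. By Theorem \ref{distal}, every point of $X$ is periodic for $f$. A point is periodic for $f$ if, and only if, it is periodic for $f^{-1}$ (indeed $\mathcal{O}^{\Z}_f(x)=\mathcal{O}^{\Z}_{f^{-1}}(x)$, and on the finite $f$-invariant set $\{f^k(x):k\in\Z\}$ the map $f$ restricts to a permutation, whose inverse is a power of it, so the period is unchanged). Hence every point of $X$ is periodic for the homeomorphism $f^{-1}$, and applying Theorem \ref{distal} once more, now to the system $(X,f^{-1})$, we conclude that $(X,f^{-1})$ is positively distal, i.e. every function in $E(X,f^{-1},\N)$ is injective.

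Now combine the two facts. Every function in $E(X,f,\N)$ is injective (positive distality of $(X,f)$) and every function in $E(X,f^{-1},\N)$ is injective (just established). Since $E(X,f,\Z)=E(X,f,\N)\cup E(X,f^{-1},\N)$, every function in $E(X,f,\Z)$ is injective, which by the equivalence recalled in Section \ref{preliminaries} (distality $\Leftrightarrow$ injectivity of all elements of the Ellis semigroup) says exactly that $(X,f)$ is distal. This completes the argument.

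As for obstacles, there is essentially no hard step: the only thing requiring care is the bookkeeping, since positive distality is a statement about $E(X,f,\N)$ alone while distality involves the full $\Z$-semigroup, so Theorem \ref{distal} must be invoked twice — once for $f$ and once for $f^{-1}$ — with the elementary remark that periodicity of every point is insensitive to replacing $f$ by $f^{-1}$ serving as the bridge between the two applications.
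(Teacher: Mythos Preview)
Your proof is correct and follows essentially the same approach as the paper: apply Theorem \ref{distal} to $f$ to obtain that every point is periodic, observe that periodicity is the same for $f$ and $f^{-1}$, apply Theorem \ref{distal} again to $f^{-1}$, and then use the decomposition $E(X,f,\Z)=E(X,f,\N)\cup E(X,f^{-1},\N)$ together with the injectivity characterization of distality. The paper compresses this into a single sentence preceding the corollary, but the logical content is identical.
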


It is natural to wonder when  $E(X,f,\nat)$ and $E(X,f,\Z)$ are equal, we show next that this happens for any distal system. We need an auxiliary result. 

\begin{lema}
\label{p-periodico}
Let $X$ be a compact metric space and $f:X\to X$ be a homeomorphism. 
The following are equivalent.
\begin{enumerate}
 \item[(i)] $E(X,f,\nat)= E(X,f,\Z)$.
 \item[(ii)] There is $p\in \beta\nat\setminus\{0\}$ such that $f^p=1_X$.
\end{enumerate}
\end{lema}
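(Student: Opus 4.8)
The plan is to show the two implications separately, using the ultrafilter representation $E(X,f,\nat)=\{f^p:\; p\in\beta\nat\}$ together with the composition rule $f^p\circ f^q=f^{q+p}$.

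\emph{From (ii) to (i).} Suppose $f^p=1_X$ for some $p\in\beta\nat\setminus\{0\}$. It suffices to show $f^{-1}\in E(X,f,\nat)$, since then $f^{-n}=(f^{-1})^n\in E(X,f,\nat)$ for all $n$, whence $E(X,f,\Z)=\overline{\{f^n:\; n\in\Z\}}\subseteq E(X,f,\nat)$, and the reverse inclusion is trivial. Now I would use $p$ to manufacture the inverse: consider the ultrafilter $q$ obtained by ``shifting $p$ down by one'', i.e. $A\in q$ iff $\{n:\; n+1\in A\}\in p$ (equivalently $q=p+1$ in a suitable convention, or $1+q=p$ — here I must be careful with the ordering of the sum of ultrafilters as defined in the excerpt). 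One checks $f^q=f^{-1}\circ f^p=f^{-1}$, or directly that $f\circ f^q=f^p=1_X$ so that $f^q=f^{-1}$. The point requiring a little care is that since $p\neq 0$, the shifted ultrafilter $q$ is still a genuine element of $\beta\nat$ (indeed $q\neq 0$ as well if $p$ is nonprincipal, and if $p=k$ is a positive principal ultrafilter then $q=k-1$ works), so $f^{-1}=f^q\in E(X,f,\nat)$.

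\emph{From (i) to (ii).} Suppose $E(X,f,\nat)=E(X,f,\Z)$. Then $f^{-1}\in E(X,f,\nat)$, so $f^{-1}=f^q$ for some $q\in\beta\nat$. Compose with $f$: $1_X=f\circ f^{-1}=f\circ f^q=f^{q+1}$ (using $f^1\circ f^q = f^{q+1}$), and $p\eqdef q+1\in\beta\nat$. It remains to see $p\neq 0$, i.e. $p$ is not the principal ultrafilter at $0$. If it were, then $f^q=f^{-1}$ would force, composing appropriately, $f=1_X$... wait — more carefully: $p=0$ means $q+1=0$, which is impossible in $\beta\nat$ since the range of $n\mapsto q+n$ avoids the interval $[0, \text{the least element ``below'' } q)$; concretely $q+1\in(\nat+1)^{\wedge}=\{A:\; 1\in A \text{ or } A \text{ cofinite-ish}\}$... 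The clean argument: for any $q\in\beta\nat$ one has $\{n:\; n\geq 1\}\in q+1$, because $q+1 = 1+q$ only if... I would instead argue $p=q+1$ satisfies $\nat\setminus\{0\}\in p$ directly from the definition of the sum, since $\{m:\; m+1\in \nat\setminus\{0\}\}=\nat\in q$; hence $p\neq 0$.

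\emph{Main obstacle.} The delicate point throughout is bookkeeping with the (noncommutative) addition of ultrafilters as defined in the excerpt, specifically getting the order of $q+1$ versus $1+q$ right so that the computation $f\circ f^q=f^{q+\text{something}}$ matches $f^p\circ f^q=f^{q+p}$, and confirming that the resulting $p$ is nonprincipal-at-$0$. Once the conventions are pinned down, both directions are short; the construction of the ``shifted'' ultrafilter $q$ with $f^q=f^{-1}$ in the $(ii)\Rightarrow(i)$ direction is the only place where a genuine (if routine) verification is needed.
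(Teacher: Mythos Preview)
Your plan matches the paper's proof almost exactly: for (i)$\Rightarrow$(ii) pick $q$ with $f^q=f^{-1}$ and set $p=q+1$; for (ii)$\Rightarrow$(i) shift $p$ down by one to get $q$ with $f^q=f^{-1}$. Your verification that $p=q+1\neq 0$ via $\nat\setminus\{0\}\in q+1$ is correct.

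There is one concrete slip, precisely at the point you flagged as delicate. Your formula ``$A\in q$ iff $\{n:\; n+1\in A\}\in p$'' shifts $p$ \emph{up}, not down: for $p$ principal at $k$ it gives $q$ principal at $k+1$, and in general one computes
\[
f^q(x)=q\text{-}\lim_n f^n(x)=p\text{-}\lim_n f^{n+1}(x)=f(f^p(x)),
\]
so $f^q=f\circ f^p=f$, not $f^{-1}$. The correct shift (and the one the paper uses) is
\[
A\in q \iff A+1=\{a+1:\; a\in A\}\in p,
\]
which for principal $p=k\geq 1$ gives $q=k-1$, and in general yields $q+1=p$ (when $p\neq 0$), hence $f\circ f^q=f^{q+1}=f^p=1_X$ and $f^q=f^{-1}$. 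Once this sign is fixed, your argument is complete and identical in spirit to the paper's; the paper also separates the principal case ($p=n\geq 1$, take $q=n-1$) from the nonprincipal case exactly as you do.
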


\begin{proof}
Suppose (i) holds. Then $f^{-1}\in E(X,f,\nat)$, thus there is $q\in \beta\nat$ such that $f^q=f^{-1}$. Then $f^{q+1}=1_X$.
Conversely, suppose (ii) holds and let $p\in \beta\nat\setminus\{0\}$ be such that $f^p=1_X$. It suffices to show that $f^{-1}\in E(X,f,\nat)$. If $p$ is principal, then there is $n\geq 1$ such that $f^n=1_X$. Clearly we can assume $n\geq 2$. Then   $f^{-1}=f^{n-1}\in E(X,f,\nat)$. Now suppose that $p$ is not principal. Let $q$  be $\{A\su\nat:\; A+1\in p\}$. Then $q$ is an ultrafilter and $1_X = f^p=f\circ f^q=f^q\circ f$. Thus $f^{-1}=f^q$ and we are done.
\end{proof}

\begin{teo}
\label{distal-Iguales-E}
Let $X$ be a compact metric space and  $f:X\to X$ be a distal homeomorphism.  Then    $E(X,f,\nat)=E(X,f,\Z)$.
\end{teo}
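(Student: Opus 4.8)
The plan is to use the characterization from Lemma \ref{p-periodico}: it suffices to produce a single ultrafilter $p\in\beta\nat\setminus\{0\}$ with $f^p=1_X$. Since $(X,f)$ is distal, every $g\in E(X,f,\Z)$ is injective; I will exploit this together with the compactness (and the semigroup structure) of $E(X,f,\nat)$ to locate an idempotent and then upgrade it to the identity.

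First I would recall the standard Ellis–Numakura fact that every compact left topological semigroup contains an idempotent; applying this to $E(X,f,\nat)$ (which is such a semigroup by the discussion in Section \ref{preliminaries}) yields $p\in\beta\nat$ with $f^p\circ f^p=f^p$, i.e.\ $f^p$ is idempotent in $E(X,f,\nat)$. Now distality forces $f^p$ to be injective, and an injective idempotent on any set is the identity: from $f^p(f^p(x))=f^p(x)$ and injectivity of $f^p$ we get $f^p(x)=x$ for all $x$, hence $f^p=1_X$. The only remaining point is to guarantee $p\neq 0$ (the principal ultrafilter at $0$, which also gives the identity but is useless for Lemma \ref{p-periodico}); but if $f$ has no periodic-type collapse the idempotent we picked may a priori be principal. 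To handle this cleanly I would instead first observe that $E(X,f,\nat)$ is infinite unless $f$ has finite order — and if $f^n=1_X$ for some $n\geq 1$ we are trivially done by the principal case of Lemma \ref{p-periodico} — so we may assume $E(X,f,\nat)$ is infinite, pick the idempotent $f^p$ inside the closed subsemigroup $\overline{\{f^n:\; n\geq 1\}}\setminus\{f^n:\; n\in\nat\}$ (the ``remainder'', which is a compact right ideal hence itself contains an idempotent), and then $p$ is automatically non-principal, so $p\neq 0$.

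With $p\in\beta\nat\setminus\{0\}$ and $f^p=1_X$ in hand, Lemma \ref{p-periodico} gives $E(X,f,\nat)=E(X,f,\Z)$, completing the proof. The main obstacle I anticipate is precisely the bookkeeping around principal versus non-principal ultrafilters: one must be careful that the idempotent produced is not merely the trivial one at $0$, which is why I would isolate the finite-order case first and then work inside the non-principal remainder, where every idempotent automatically lies in $\beta\nat\setminus\nat$. Everything else — the existence of an idempotent in a compact left topological semigroup, the triviality of injective idempotents, and the computation of $f^{q}$ with $q=\{A:\; A+1\in p\}$ — is routine and can be cited or dispatched in a line.
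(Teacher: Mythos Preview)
Your overall strategy coincides with the paper's: produce an idempotent in the enveloping semigroup, use distality to force it to be $1_X$, and then invoke Lemma~\ref{p-periodico}. Your step from idempotent to identity --- observing that an \emph{injective} idempotent on a set must be the identity --- is in fact more elementary than the paper's, which at that point cites Ellis's theorem that $E(X,f,\Z)$ is a group when $(X,f)$ is distal.

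The genuine gap is in your handling of $p\neq 0$. The set $R=\overline{\{f^n:n\geq 1\}}\setminus\{f^n:n\in\nat\}$ is in general neither compact nor a right ideal of $E(X,f,\nat)$, so Ellis--Numakura does not apply to it. For an irrational rotation of the circle by angle $\theta$, the semigroup $E(X,f,\nat)$ is the full rotation group and the powers $f^n$ are dense in it, so $R$ is not closed; moreover the rotation $g$ by $-\theta$ lies in $R$ while $g\circ f=1_X=f^0\notin R$, so $R$ is not a right ideal either.

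The repair is to run the argument upstairs in $\beta\nat$ rather than in $E(X,f,\nat)$. The remainder $\nat^*=\beta\nat\setminus\nat$ \emph{is} a closed two-sided ideal of $(\beta\nat,+)$, so Ellis--Numakura yields an idempotent $p\in\nat^*$; this is exactly the ``idempotent ultrafilter'' the paper invokes. Then $f^p\circ f^p=f^{p+p}=f^p$, your injectivity argument gives $f^p=1_X$, and since $p$ is automatically non-principal, Lemma~\ref{p-periodico} applies directly --- no case split on the order of $f$ is needed.
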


\begin{proof}
It is well known that there are idempotent ultrafilters, i.e., ultrafilters $p$  such that  $p+p=p$ (see for instance \cite{Blass}). Let $p$ be an idempotent ultrafilter, then $f^p\circ f^p=f^{p+p}=f^p$. Since $f$ is distal, $E(X,f, \Z)$ is a group (see \cite{Auslander1988} page  69) and  thus $f^p=1_X$. Clearly such $p$ is non principal, then the conclusion follows from Lemma \ref{p-periodico}.
\end{proof}

The proof of the previous result used that the enveloping semigroup is in fact a group when the system is distal. In some cases, as for $X$ countable,  we do not need to use this fact. We include the following lemma just to have a self contained presentation of  our results when  $X$ is a compact metric countable space. 

\begin{lema}
\label{igualesE}
Let $X$ be a compact metric space and  $f:X\to X$ be a homeomorphism. If every point is periodic, then   there is $p\in\nat^*$ such that $f^p=1_X$.
\end{lema}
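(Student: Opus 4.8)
The plan is to exhibit one non-principal ultrafilter $p$ on $\nat$ for which $f^p$ is the identity, that is, $p\text{-}\lim_n f^n(x)=x$ for every $x\in X$. For $x\in X$ let $n_x\ge 1$ be its period and set $A_x=\{n\in\nat:\ f^n(x)=x\}$. Since $f^m(x)=x$ forces $n_x\mid m$, we have $A_x=n_x\nat=\{kn_x:\ k\in\nat\}$; in particular $A_x$ is infinite and contains $0$. The key reduction is: if $p\in\nat^*$ satisfies $A_x\in p$ for all $x$, then we are done. Indeed, for such $p$ and such $x$ we have $n_x\nat\in p$ with $p$ non-principal, so $p\in(n_x\nat+0)^*$, and Lemma \ref{GRU1} (with $l=0$) gives $f^p(x)=f^0(x)=x$; alternatively, for any neighbourhood $V$ of $x$ the set $\{n:\ f^n(x)\in V\}$ contains $A_x\in p$, so directly $f^p(x)=p\text{-}\lim_n f^n(x)=x$.

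To build $p$, I would take the family $\mathcal F$ consisting of all the sets $A_x$ (which is really the countable family $\{n\nat:\ n\in P_f\}$) together with all cofinite subsets of $\nat$, check that $\mathcal F$ has the finite intersection property, and let $p$ be any ultrafilter extending $\mathcal F$. Because $\mathcal F$ contains every cofinite set, $p$ is automatically non-principal, i.e. $p\in\nat^*$. For the finite intersection property: given finitely many points $x_1,\dots,x_k$, one has $A_{x_1}\cap\dots\cap A_{x_k}=m\nat$ with $m=\mathrm{lcm}(n_{x_1},\dots,n_{x_k})$, which is infinite, and intersecting this further with finitely many cofinite sets deletes only finitely many elements, so the result is still infinite, hence nonempty.

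I expect the only delicate point to be ensuring that $p$ is non-principal. Simply picking an ultrafilter containing all the $A_x$ is not enough, since $0\in A_x$ for every $x$ and hence the principal ultrafilter at $0$ already contains the whole family; this is precisely why the cofinite filter is adjoined to $\mathcal F$ before extending, and why the finite-intersection check must produce infinite (not merely nonempty) intersections. The remaining ingredients — the identification $A_x=n_x\nat$ and the evaluation of $f^p(x)$ — are routine, the latter being an immediate instance of Lemma \ref{GRU1}.
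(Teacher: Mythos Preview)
Your proof is correct and follows essentially the same route as the paper's: both build a non-principal ultrafilter lying in every $(n\nat)^*$ for $n\in P_f$ via the finite intersection property, and then invoke Lemma~\ref{GRU1} with $l=0$. The only cosmetic difference is that the paper phrases the FIP argument directly in terms of the closed sets $(k\nat)^*\subseteq\nat^*$, which builds in non-principality from the start, whereas you equivalently adjoin the cofinite filter to the family $\{n\nat:n\in P_f\}$ before extending.
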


\proof  It is clear that  $\{(k\nat)^*:\; k\in\nat\}$ has the finite intersection property, thus there is an ultrafilter $p$ such that 
$$
p\in \bigcap_{k\in P_f} (k\nat)^*.
$$ 
Since every point is periodic, for any such $p$ we have that  $f^p(x)=x$ for all $x$ (by Lemma \ref{GRU1}).
\endproof

We show now that on countable spaces, Theorem \ref{distal-Iguales-E} is an equivalence.  However, we do not know if this holds in general.  We need an auxiliary result which is interesting in itself.

\begin{lema}
\label{todosperiodicos}
Let $X$ be a compact metric countable space and $f:X\to X$ be a homeomorphism. Suppose $f^p$ is a homeomorphism for some non principal $p\in \beta\nat$. Then every point of $X$ is periodic. 
\end{lema}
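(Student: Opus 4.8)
The plan is to argue by contradiction: assume $f^p$ is a homeomorphism for some non principal $p$, but that some point of $X$ is not periodic. First I would reduce to the minimal counterexample by Cantor-Bendixson rank. More precisely, let $\alpha$ be least such that $X^{(\alpha)}$ contains a non periodic point of $f$; such an $\alpha$ exists by Lemma \ref{cbrank} (the top level $M=X^{(\beta)}$ consists of periodic points, so $\alpha$ is strictly below the cb-rank). Since $f$ is a homeomorphism, each $X^{(\gamma)}$ is $f$-invariant, and so is $f^p$-invariant by Lemma \ref{p-lim}(i); thus we may work inside the closed invariant subspace $Y=X^{(\alpha)}$ with the restricted homeomorphism $g=f\restriction Y$, and note $g^p=f^p\restriction Y$ is still a homeomorphism of $Y$. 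By the minimality of $\alpha$, every point of $Y'=Y^{(\alpha+1)}$ wait --- every point of $Y$ of cb-rank (in $Y$) at least $1$ is periodic for $g$, while $Y$ itself has an isolated non periodic point $x_0$.

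Next I would exploit that $x_0$ is isolated in $Y$ with infinite $g$-orbit. The $g$-orbit of $x_0$ accumulates somewhere in $Y$ (as $Y$ is compact and the orbit is infinite), and any accumulation point $z$ lies in $Y'$, hence is $g$-periodic, say with period $n$; let $q\in\nat^*$ witness $z=q\text{-}\lim_m g^m(x_0)$. Now the key computation: using Lemma \ref{GRU1} applied to the periodic point $z$ and a suitable residue class, together with Lemma \ref{p-lim}(i)--(ii), I would show that $g^p$ collapses two distinct points onto the same image, contradicting injectivity of $g^p$. Concretely, for an appropriate choice relating $p$ and $q$ one finds $0\le r<n$ with $g^p(g^{n-r}(z))=g^n(z)=z$ by Lemma \ref{GRU1}, so $z$ is a value of $g^p$ attained at a point of $Y'$; on the other hand, writing $z=q\text{-}\lim g^m(x_0)$ and using that $f^p$ commutes appropriately, one also realizes $z$ (or a point of the orbit of $x_0$) as $g^p$ of the isolated point $x_0$ or one of its images. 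Since an isolated non periodic point cannot equal a point of $Y'$, these two preimages are distinct, so $g^p$ is not injective — the desired contradiction. This mirrors closely the mechanism of Lemma \ref{no-distal}, and in fact once one knows $x_0$ has a non-periodic orbit inside the compact countable $Y$, one can restrict further to $\overline{\mathcal O_g(x_0)}$ where $x_0$ has dense orbit, apply Lemma \ref{no-distal} to get that $(\overline{\mathcal O_g(x_0)},g)$ is not positively distal, i.e. some $h\in E$ on that subspace is non injective, and then lift this non-injectivity to show $g^p$ (hence $f^p$) fails to be injective.

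The main obstacle I anticipate is the lifting step: Lemma \ref{no-distal} gives a non injective element of the Ellis semigroup of the subsystem, but to contradict injectivity of the specific given $f^p$ I must either (a) show that $f^p$ itself is non injective, not merely some element of $E(X,f,\nat)$, or (b) observe that if $f^p$ is a homeomorphism then so is every element of $E(X,f,\nat)$ that is "below" it, or that injectivity of one non principal power propagates. The cleanest route is probably (a): choose the ultrafilter witnessing the accumulation so that it matches $p$ on the relevant residue classes modulo the period $n$ of the accumulation point, so that the non injectivity occurs for $f^p$ on the nose via Lemma \ref{GRU1}. Handling the bookkeeping of residue classes (using the observation from the preliminaries that every $p\in\beta\nat$ lies in some $(n\nat+r)^{\wedge}$) and making sure the two preimages one produces are genuinely distinct — one isolated of cb-rank $0$ in $Y$, the other of positive cb-rank in $Y$ — is where the care is needed, but it is essentially the same bookkeeping already carried out in the proof of Lemma \ref{no-distal}.
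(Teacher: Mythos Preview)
Your approach is far more elaborate than needed and has real gaps. The paper's proof is two lines: if $x$ is non-periodic, then all $f^n(x)$ (for $n\in\Z$) are distinct and share the Cantor--Bendixson rank of $x$; since $p$ is non-principal, $f^p(x)$ is an accumulation point of the infinite set $\{f^n(x):n\in\nat\}\subseteq X^{(cbr(x))}$, hence $cbr(f^p(x))>cbr(x)$. But a homeomorphism preserves cb-rank, contradiction. No reduction to a subspace, no appeal to Lemma~\ref{no-distal}, no lifting step.

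Your reduction step is flawed on two counts. First, you write ``least $\alpha$ such that $X^{(\alpha)}$ contains a non-periodic point,'' but $X^{(0)}=X$ already does, so this gives $\alpha=0$; you mean the \emph{greatest} such $\alpha$. Second, even the greatest need not exist: if there are non-periodic points of every finite cb-rank but none of rank $\geq\omega$, the supremum $\omega$ is not attained and no choice of $Y=X^{(\alpha)}$ has the property you want (non-periodic isolated point, all of $Y'$ periodic). More seriously, the lifting obstacle you correctly flag is never resolved. Lemma~\ref{no-distal} produces non-injectivity of \emph{some} $f^q$, not of the given $f^p$; your fix (a) --- taking $z=g^p(x_0)$ and finding a periodic second preimage via Lemma~\ref{GRU1} --- only works once you know $z$ is periodic, which depends on the broken reduction; and fix (b) amounts to positive distality, i.e.\ the very conclusion you seek (via Theorem~\ref{distal}). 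The irony is that the cb-rank comparison you invoke at the very end (``one of cb-rank $0$ in $Y$, the other of positive cb-rank'') is precisely the paper's mechanism --- applied directly to $x$ and $f^p(x)$, it finishes the proof in one stroke without any of the surrounding machinery.
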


\proof  Suppose there is $x\in X$ non periodic. Clearly all $f^n(x)$, with $n\in \Z$, have the same  cb-rank. Since $p$ is not principal,  $f^p(x)$ has larger  cb-rank than $x$, but this contradicts that $f^p$ is a homeomorphism. 
\endproof

The previous result is no longer valid if $f^p$ is not assumed to be injective  as we show next.

\begin{ejem}
Consider  $X$ to be a convergent sequence, say $X=\{x_n:\;n \in \N\}\cup \{\infty\}$.
Let $f$ be defined as follows: $f(x_{2n+1})=x_{2n-1}$ for all $n\geq 2$, $f(x_1)=x_0$, $f(x_{2n})= x_{2n+2}$ for all $n\geq 0$ and $f(\infty)=\infty$.  Then $E(X,f,\Z)$ is equal to $\{f^n:\; n\in \Z\}\cup \{\overline{\infty}\}$, where $\overline \infty$ is the constant function equal to $\infty$. This is a WAP system and there are no periodic points except $\infty$. 
\end{ejem}

\begin{teo}
\label{ZNdistality}
Let $X$ be a compact metric countable space.
Let $f:X\to X$ be a homeomorphism. The following are equivalent.
\begin{itemize}
\item[(i)]  Every point of $X$ is periodic.
\item[(ii)] $E(X,f,\nat)=E(X,f,\Z)$.
\end{itemize}
\end{teo}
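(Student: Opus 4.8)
The plan is to prove the two implications separately, using the lemmas already established. For $(i)\Rightarrow(ii)$, I would invoke Lemma \ref{igualesE}: if every point of $X$ is periodic, then there is a non-principal $p\in\nat^*$ with $f^p=1_X$, and since $p\neq 0$, Lemma \ref{p-periodico} gives immediately that $E(X,f,\nat)=E(X,f,\Z)$. This direction is essentially free once the auxiliary results are in place.

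For the converse $(ii)\Rightarrow(i)$, I would argue as follows. Assume $E(X,f,\nat)=E(X,f,\Z)$. By Lemma \ref{p-periodico} there is $p\in\beta\nat\setminus\{0\}$ with $f^p=1_X$. If $p$ is principal, then $f^n=1_X$ for some $n\geq 1$, so every point is periodic and we are done. So assume $p$ is non-principal. Then $f^p=1_X$ is in particular a homeomorphism, so Lemma \ref{todosperiodicos} applies directly and yields that every point of $X$ is periodic. That is the whole argument.

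I expect no serious obstacle here: the theorem is a packaging result that combines Lemma \ref{p-periodico} (the abstract equivalence between $E(X,f,\nat)=E(X,f,\Z)$ and the existence of a non-trivial $p$ with $f^p=1_X$), Lemma \ref{igualesE} (periodicity produces such a $p$), and Lemma \ref{todosperiodicos} (a non-principal $p$ with $f^p$ a homeomorphism forces periodicity). The only point requiring a half-sentence of care is the principal case of $p$ in the converse direction, which is handled trivially since $f^n=1_X$ means every point has period dividing $n$. One could also phrase $(i)\Rightarrow(ii)$ as a consequence of Corollary \ref{distal2} together with Theorem \ref{distal-Iguales-E}, but going through Lemma \ref{igualesE} keeps the argument self-contained for countable $X$ without appealing to the group structure of the Ellis semigroup, which is the stated goal of including Lemma \ref{igualesE}.

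Thus the proof I would write is short: "$(i)\Rightarrow(ii)$: By Lemma \ref{igualesE} there is $p\in\nat^*$ with $f^p=1_X$; since $p\neq 0$, Lemma \ref{p-periodico} gives $E(X,f,\nat)=E(X,f,\Z)$. $(ii)\Rightarrow(i)$: By Lemma \ref{p-periodico} there is $p\in\beta\nat\setminus\{0\}$ with $f^p=1_X$. If $p$ is principal, then $f^n=1_X$ for some $n\geq 1$ and every point is periodic. If $p$ is non-principal, then $f^p=1_X$ is a homeomorphism and Lemma \ref{todosperiodicos} shows every point of $X$ is periodic."
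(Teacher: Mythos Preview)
Your proof is correct and follows essentially the same approach as the paper: both directions invoke Lemmas \ref{p-periodico}, \ref{igualesE}, and \ref{todosperiodicos} in the same way. The only cosmetic difference is that for $(ii)\Rightarrow(i)$ the paper works directly with $f^{-1}=f^p$ (obtained from $f^{-1}\in E(X,f,\nat)$) rather than passing through Lemma \ref{p-periodico} to get $f^p=1_X$, but the case split on principal versus non-principal $p$ and the appeal to Lemma \ref{todosperiodicos} are identical.
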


\proof  Suppose (i) holds. Then (ii) follows from Lemmas \ref{p-periodico} and \ref{igualesE}. Suppose that (ii) holds. Then $f^{-1}\in E(X,f,\nat)$. Hence there is a   $p\in \beta\nat$ such that $f^{-1}=f^p$. If $p$ is principal, then there is $k\in \nat $ such that  $f^p=f^k$. Hence $f^{k+1}(x)=x$ for all $x\in X$.   If $p$ is not principal, the conclusion follows by Lemma \ref{todosperiodicos}. 
\endproof

\section{The inverse of an element of $E(X,f,\Z)$.}

Now we are going to represent the inverse of an element of the Ellis semigroup.
The idea is to show that in some systems the following equality hold for every $p\in \beta\nat$.
\begin{eqnarray}
\label{p-inversos}
(f^p)^{-1}=(f^{-1})^p.
\end{eqnarray}

\begin{lema}
\label{p-inversos2}
Let $X$ be a compact metric space and $f:X\to X$ be a homeomorphism. Let $x\in X$ be a periodic point and $p\in \beta\N$. Then  
$$
(f^p\circ (f^{-1})^p)(x)= ( (f^{-1})^p\circ f^p)(x)=x.
$$
\end{lema}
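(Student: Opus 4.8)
The plan is to reduce everything to the finite orbit of the periodic point $x$ and use Lemma \ref{GRU1} together with the fact that $p$-iterates act on a periodic point simply by choosing a residue class. First I would let $n$ be the period of $x$, so that the orbit $\mathcal{O}_f(x) = \{x, f(x), \dots, f^{n-1}(x)\}$ has exactly $n$ elements, and $f$ restricted to this orbit is a cyclic permutation. Then I would pick $0 \le r < n$ with $p \in (n\N + r)^{*}$ if $p$ is non-principal (or the obvious $r$ if $p$ is principal; the principal case is immediate since then $f^p$ is just some power of $f$, which is invertible on the orbit). By Lemma \ref{GRU1}, $f^p(x) = f^r(x)$.

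Next I would compute $(f^{-1})^p(x)$. Since $x$ is periodic of period $n$ for $f$, it is also periodic of period $n$ for $f^{-1}$, and $(f^{-1})^k(x) = f^{-k}(x) = f^{n-k \bmod n}(x)$. Applying Lemma \ref{GRU1} to the homeomorphism $f^{-1}$ and the same ultrafilter $p \in (n\N+r)^{*}$, I get $(f^{-1})^p(x) = (f^{-1})^r(x) = f^{n-r}(x)$ (interpreting $f^0 = f^n$ as the identity on the orbit when $r = 0$). The key point is that both $f^p$ and $(f^{-1})^p$ send $x$ into its own orbit, and on that orbit they act exactly like $f^r$ and $f^{-r}$ respectively.

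Now I would combine the two. To evaluate $(f^p \circ (f^{-1})^p)(x)$, note that $(f^{-1})^p(x) = f^{n-r}(x) = f^{-r}(x)$ is again a point of the orbit of $x$ with the same period $n$, so I may apply Lemma \ref{GRU1} again: $f^p\big(f^{-r}(x)\big) = f^r\big(f^{-r}(x)\big) = x$. Symmetrically, $(f^{-1})^p(x) = f^{-r}(x)$, hence $(f^{-1})^p\big(f^p(x)\big) = (f^{-1})^p\big(f^r(x)\big) = (f^{-1})^r\big(f^r(x)\big) = x$, using Lemma \ref{GRU1} for $f^{-1}$ at the periodic point $f^r(x)$. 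This gives both equalities.

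The only subtlety — and the one step I would be careful about — is making sure Lemma \ref{GRU1} is being applied with the right residue for the right map at the right point: one must check that $f^r(x)$ and $f^{-r}(x)$ genuinely have period $n$ (true, since they lie in the orbit of $x$, all of whose points have period $n$), and that the residue of $p$ modulo $n$ is the same whether we think of iterating $f$ or $f^{-1}$ — it is, because the residue class $(n\N+r)^{*}$ is a property of $p$ and $n$ alone, independent of which map we iterate; the direction of iteration is absorbed into whether we apply $f^r$ or $f^{-r}$. No genuine obstacle arises; this is essentially bookkeeping on a finite cyclic group $\Z/n\Z$.
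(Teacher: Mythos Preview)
Your proposal is correct and follows essentially the same approach as the paper: split off the principal case, pick $0\le r<n$ with $p\in (n\N+r)^{*}$, use Lemma~\ref{GRU1} for both $f$ and $f^{-1}$ to get $f^p(z)=f^r(z)$ and $(f^{-1})^p(z)=f^{-r}(z)$ for any $z$ of period $n$, and then compose. The paper's proof is just a slightly more compact version of what you wrote.
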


\proof If $p$ is a principal ultrafilter, then for some $m$, $f^p=f^m$  and $(f^{-1})^p=f^{-m}$ and  thus the conclusion is obvious. Suppose now that $p\in \nat^*$ and $n$ is the period of $x$. Let $0\leq r<n$ be such that $p\in (n\N+r)^*$. Notice that for any $z\in X$ of period $n$, by Lemma \ref{GRU1}, we have that  $f^p(z)=f^r(z)$ and  $(f^{-1})^p(z)=f^{-r}(z)$.  Since $f^p(x)=f^r(x)$ and $f^r(x)$ also  has period $n$,  $( (f^{-1})^p\circ f^p)(x)=(f^{-1})^p(f^r(x)) =f^{-r}(f^r(x))=x$.  Analogously, we have $(f^p\circ (f^{-1})^p)(x)= x$.
\endproof

\begin{lema}
\label{puntual}
Let $X$ be a compact metric space and $f:X\to X$  be a homeomorphism. Suppose $E(X,f,\Z)$ is WAP and $x$ is a point with dense $\Z$-orbit.  If $f^{n_i}(x)\to f^p(x)$ for an ultrafilter $p$,  then  $(f^{n_i})_i$ converges pointwise to $f^p$. 
\end{lema}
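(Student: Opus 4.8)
The plan is to fix an arbitrary $z\in X$ and show that $f^{n_i}(z)\to f^p(z)$. The first step is to reduce to points of the orbit of $x$: the map $z\mapsto f^p(z)$ is continuous, because $E(X,f,\Z)$ is WAP, so $f^p$ is a continuous function; and each $f^{n_i}$ is continuous. Since $x$ has dense $\Z$-orbit, the set $\mathcal O^{\Z}_f(x)$ is dense in $X$, so it suffices to prove that $(f^{n_i})_i$ converges to $f^p$ on every point $f^k(x)$, $k\in\Z$, and then pass to the closure using the continuity of both $f^p$ and the (accumulation point of the) net $f^{n_i}$. The second step handles the dense orbit: for $z=f^k(x)$ we have, using that composition is continuous in the left variable in $E(X,f,\Z)$ and that $f^k$ is a homeomorphism,
\[
f^{n_i}(f^k(x)) = f^k(f^{n_i}(x)) \to f^k(f^p(x)) = f^p(f^k(x)),
\]
where the middle convergence is just the hypothesis $f^{n_i}(x)\to f^p(x)$ composed with the continuous map $f^k$, and the last equality is Lemma \ref{p-lim}(ii) (with $h=f^k$, $g=f$), since $f^k$ commutes with $f$. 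So $(f^{n_i})_i$ converges to $f^p$ pointwise on the dense set $\mathcal O^{\Z}_f(x)$.

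The third step is the passage from a dense set to all of $X$, and this is where some care is needed. A priori, pointwise convergence of a net of continuous functions on a dense set does not imply pointwise convergence everywhere. However, $X^X$ is compact, so the net $(f^{n_i})_i$ has a convergent subnet; its limit $g$ lies in $E(X,f,\Z)$, hence $g$ is continuous (WAP again), and $g$ agrees with the continuous function $f^p$ on the dense set $\mathcal O^{\Z}_f(x)$, so $g=f^p$. Since every subnet of $(f^{n_i})_i$ has a further subnet converging to $f^p$, and $X^X$ is a (compact Hausdorff, hence) topological space in which this suffices, the whole net $(f^{n_i})_i$ converges to $f^p$ in $X^X$, i.e.\ pointwise.

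The main obstacle is precisely this last step: one must not be tempted to conclude pointwise convergence everywhere directly from convergence on a dense set, but instead route through the compactness of $X^X$ and the fact that, in a compact Hausdorff space, a net all of whose convergent subnets have the same limit $\ell$ must itself converge to $\ell$. The WAP hypothesis is essential here, since it is what forces both $f^p$ and every subnet limit $g$ to be continuous, so that agreement on a dense set upgrades to equality.
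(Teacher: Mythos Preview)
Your proof is correct but takes a genuinely different route from the paper's. The paper argues more directly: given any $y\in X$, it uses the dense $\Z$-orbit of $x$ to write $y=h(x)$ for some $h\in E(X,f,\Z)$ (namely $h=f^q$ or $h=(f^{-1})^q$ for a suitable ultrafilter $q$), and then uses continuity of $h$ (from WAP) together with the fact that a WAP Ellis semigroup is abelian (Lemma \ref{p-lim}(iv)) to get
\[
f^{n_i}(y)=f^{n_i}(h(x))=h(f^{n_i}(x))\to h(f^p(x))=f^p(h(x))=f^p(y).
\]
This handles every $y$ in one stroke, with no subnet argument.

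Your approach instead establishes convergence only on the dense orbit $\mathcal O^{\Z}_f(x)$ (which needs only commutation with integer powers $f^k$, not the full abelian property), and then upgrades to all of $X$ via compactness of $X^X$: any subnet limit $g$ lies in $E(X,f,\Z)$, is continuous by WAP, and agrees with $f^p$ on a dense set, hence equals $f^p$. The paper's argument is shorter and avoids the subnet/compactness machinery, at the price of invoking commutativity of $E(X,f,\Z)$; your argument is a bit longer but uses WAP only through continuity of the subnet limits and of $f^p$, which is arguably more transparent about where the hypothesis enters.
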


\begin{proof} Let $y \in X$.  We claim that there exists a function $h \in E(X,f,\Z)$ such that $y=h(x)$. In fact, as $x$ has dense $\Z$-orbit,  there are two cases to consider: (i) $y=\lim_k f^{m_k}(x)$ or (ii) $y=\lim_k f^{-m_k}(x)$ for some sequence  $(m_k)_k$ of natural numbers. Both are treated analogously. Suppose (i) holds. Let  $q$ be an ultrafilter containing $\{m_k\}_k$ and take $h=f^q$.  Since the system is WAP, $h$ is continuous. Therefore $h(f^{n_i}(x))\to h(f^p(x))$. By Lemma \ref{p-lim} $E(X,f)$ is abelian, thus $f^{n_i}(h(x))\to f^p(h(x))$ and we are done. 
\end{proof}

\begin{lema}
\label{inverse-orbita-densa}Let $X$ be a compact metric space and $f:X\to X$ be a  homeomorfism.  Suppose $(X,f)$ is an equicontinuous dynamical system and  $x$ is a point with a dense $\Z$-orbit.  Then  $(f^p)^{-1} = (f^{-1})^p$ for every $p \in \beta\nat$. 
\end{lema}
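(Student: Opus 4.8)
The plan is to establish the two identities $f^{p}\circ(f^{-1})^{p}=1_{X}$ and $(f^{-1})^{p}\circ f^{p}=1_{X}$ at once, by approximating both $p$-iterates along one and the same sequence of integers and then pushing that approximation through a composition using equicontinuity.

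First I would record that, since an equicontinuous system is WAP, every element of $E(X,f,\Z)$ is continuous, and that $\{f^{n}:n\in\Z\}$ is equicontinuous by hypothesis. Then I would pass to the product: because $p$-limits commute with the coordinate projections (Lemma~\ref{p-lim}(i)), the sequence $n\mapsto\bigl(f^{n}(x),f^{-n}(x)\bigr)$ has $p$-limit $\bigl(f^{p}(x),(f^{-1})^{p}(x)\bigr)$ in the compact metric space $X\times X$; hence there is a \emph{single} sequence $(n_{i})_{i}$ of integers with $f^{n_{i}}(x)\to f^{p}(x)$ and $f^{-n_{i}}(x)\to(f^{-1})^{p}(x)$ simultaneously. (If $p$ is principal everything is trivial, so I may assume $p$ nonprincipal and take the $n_{i}$ distinct.) Now Lemma~\ref{puntual} applied to $(X,f)$ gives $f^{n_{i}}\to f^{p}$ pointwise on all of $X$, and the very same lemma applied to the homeomorphism $f^{-1}$ — whose Ellis semigroup coincides with $E(X,f,\Z)$ and is WAP, and for which $x$ still has a dense $\Z$-orbit — gives $f^{-n_{i}}\to(f^{-1})^{p}$ pointwise.

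The heart of the argument is then the following \emph{equicontinuous composition} fact: if $g_{i}\to g$ and $h_{i}\to h$ pointwise, with all the $g_{i}$ lying in the equicontinuous family $\{f^{n}:n\in\Z\}$, then $g_{i}\circ h_{i}\to g\circ h$ pointwise. Indeed, fixing $y$ and putting $z_{i}=h_{i}(y)\to h(y)=:z$, one has $d\bigl(g_{i}(z_{i}),g(z)\bigr)\le d\bigl(g_{i}(z_{i}),g_{i}(z)\bigr)+d\bigl(g_{i}(z),g(z)\bigr)$, where the first term tends to $0$ by equicontinuity of $\{f^{n}:n\in\Z\}$ and the second by pointwise convergence of $g_{i}$ at the point $z$. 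Applying this with $g_{i}=f^{n_{i}}$, $h_{i}=f^{-n_{i}}$ and using $f^{n_{i}}\circ f^{-n_{i}}=1_{X}$ yields $f^{p}\circ(f^{-1})^{p}=1_{X}$; applying it with $g_{i}=f^{-n_{i}}$, $h_{i}=f^{n_{i}}$ (observing that $\{f^{-n}:n\in\Z\}=\{f^{n}:n\in\Z\}$ is the same equicontinuous family) and using $f^{-n_{i}}\circ f^{n_{i}}=1_{X}$ yields $(f^{-1})^{p}\circ f^{p}=1_{X}$. Therefore $(f^{p})^{-1}=(f^{-1})^{p}$ for every $p\in\beta\nat$.

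I expect the main obstacle to be the composition step: with mere pointwise convergence $g_{i}\circ h_{i}$ need not converge to $g\circ h$, so the argument must genuinely exploit equicontinuity of $\{f^{n}:n\in\Z\}$ rather than just WAP; the other point demanding care is the simultaneous extraction of the sequence $(n_{i})_{i}$ controlling both $f^{n}(x)$ and $f^{-n}(x)$, and checking that Lemma~\ref{puntual} really does apply verbatim to $f^{-1}$. As an alternative, less self-contained route one could note that equicontinuity makes $\rho(u,v)=\sup_{n\in\Z}d(f^{n}u,f^{n}v)$ a compatible metric for which every element of $E(X,f,\Z)$ is a surjective isometry, so that $E(X,f,\Z)$ is a compact topological group and continuity of inversion gives $(f^{p})^{-1}=p\text{-}\lim_{n}(f^{n})^{-1}=(f^{-1})^{p}$ directly.
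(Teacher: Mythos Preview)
Your proof is correct, and it takes a genuinely different (and shorter) route than the paper's.

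The paper first chooses $(n_i)$ with $f^{n_i}\to f^p$ uniformly, then by compactness passes to a subsequence so that $(f^{-1})^{n_i}\to (f^{-1})^q$ for some \emph{a priori unknown} $q\in\nat^*$; composition in the uniform topology then gives $f^p\circ(f^{-1})^q=1_X$, but an entire second half of the argument (the sets $A_\varepsilon$, $B_\varepsilon$, $C_\varepsilon$ and another appeal to Lemma~\ref{puntual}) is needed to identify $(f^{-1})^q$ with $(f^{-1})^p$. You sidestep this completely by extracting the sequence $(n_i)$ in the product $X\times X$ from the outset, so that $f^{n_i}(x)\to f^p(x)$ and $f^{-n_i}(x)\to(f^{-1})^p(x)$ hold \emph{simultaneously}; after Lemma~\ref{puntual} both pointwise limits are already the desired ones, and no identification step is required. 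Your composition lemma is also more self-contained: rather than invoking the coincidence of pointwise and uniform topologies on equicontinuous families and the continuity of $\circ$ for uniform convergence, you prove directly that equicontinuity of the left factors suffices for $g_i\circ h_i\to g\circ h$ pointwise. The alternative you mention at the end (the invariant metric $\rho$ making $E(X,f,\Z)$ a compact topological group, then continuity of inversion) is also valid and in fact yields Theorem~\ref{formulainverso} directly, without the dense-orbit hypothesis; the paper instead reduces Theorem~\ref{formulainverso} to the present lemma.
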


\begin{proof}
Since $\left\{{f^n: n \in \Z}\right\}$ is equicontinuous, the pointwise topology and the uniform topology on $E(X, f, \Z)$ are the same (see for instance \cite{Auslander1988} page 52). So given $p \in \beta\nat$ non-principal, there exists a strictly increasing  sequence of natural numbers $(n_i)_{i \in \N}$  such that $f^{n_i}$ converges uniformly to $f^p$. Let $g= f^{-1}$. As  each $g^{n_i}\in E(X,f, \Z)$, we can assume without loss of  generality, that $g^{n_i} \to g^q$ uniformly for some $q \in \nat^*$. 
It is well known that $\circ$ is continuous with respect to the uniform topology, so we have that  $f^{n_i}\circ g^{n_i}$ converges uniformly to $f^p\circ g^q$. Therefore $f^p\circ g^q= Id_X$ and analogously $g^q\circ f^p = Id_X$. Then  it is enough to show that $g^q = g^p$.

Let $x\in X$. For each $\epsilon >0$, let us  consider the following sets: 
$$
\begin{array}{ccc}
A_\epsilon &=& \left\{{m \in \nat: f^m(x) \in B(f^p(x); \epsilon)}\right\},\\ 
B_\epsilon &=&  \left\{{m \in \nat: g^m(x) \in B(g^q(x); \epsilon)}\right\}.
\end{array}
$$
Notice that $A_\epsilon\in p$ and $B_\epsilon \in q$.  
We claim  that  $B_\epsilon \in p$ for each $\epsilon$. Suppose otherwise, and let  $\epsilon_0>0$ be such that $B_{\epsilon_0} \notin p$. Pick a strictly increasing sequence $(m_k)_k$ such that  $m_k \in A_{\epsilon_0/k}\setminus  B_{\epsilon_0}$. Thus $f^{m_i}(x) \to f^p(x)$ and this implies by Lemma  \ref{puntual} that $f^{m_i} \to f^p$ pointwise and therefore uniformly  (because both topologies coincide). We can also assume that $(g^{m_i})_i$ is uniformly convergent and, moreover, it has to converge to $(f^p)^{-1}=g^q $ (by the uniform continuity of $\circ$). However, this implies that  there is $i$ such that  $m_i \in B_{\epsilon_0}$, which is a contradiction. Thus $ B_{\epsilon} \in p$ for each $\epsilon>0$. 

Now for each $\epsilon>0$,  consider the following sets:
$$
C_\epsilon=\left\{{m \in \nat: g^m(x) \in B(g^p(x); \epsilon)}\right\}.
$$
Notice that  $C_\epsilon \in p$ for all $\epsilon$.  Pick a sequence $(k_j)_j$ such that $k_j \in B_{1/j}\cap C_{1/j}$ for each $j$. Thus  $g^{k_j}(x) \to g^q(x)$ and  $g^{k_j}(x) \to g^p(x)$ and, by Lemma \ref{puntual}, $g^{k_j} \to g^q$ and  $g^{k_j} \to g^p$. We conclude that $g^p= g^q$.
\end{proof}

\begin{teo}
\label{formulainverso}
Let $X$ be compact metric and $f:X\to X$ be a homeomorphism. Suppose $(X,f)$ is equicontinuous. Then  $(f^p)^{-1} = (f^{-1})^p$ for every $p \in \beta\nat$. 
\end{teo}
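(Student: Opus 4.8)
The plan is to reduce the statement to Lemma~\ref{inverse-orbita-densa} by restricting to orbit closures. Fix $x\in X$ and set $Y=\overline{\mathcal O^{\Z}_f(x)}$. Since $f$ is a homeomorphism, the set $\mathcal O^{\Z}_f(x)$ is invariant under both $f$ and $f^{-1}$, hence so is its closure $Y$, and $f|_Y\colon Y\to Y$ is again a homeomorphism. The restriction of an equicontinuous family is equicontinuous (the same $\delta$ works, with the metric on $Y$ inherited from $X$), so $(Y,f|_Y)$ is an equicontinuous system, and by construction $x$ has dense $\Z$-orbit in $Y$. Therefore Lemma~\ref{inverse-orbita-densa} applies to $(Y,f|_Y)$ and yields
\[
\bigl((f|_Y)^p\bigr)^{-1}=\bigl((f|_Y)^{-1}\bigr)^p
\]
for every $p\in\beta\nat$.

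Next I would check that the $p$-iterates are compatible with this restriction. If $z\in Y$ then $f^n(z)\in Y$ for all $n$, and since $Y$ is closed the point $p$-$\lim_n f^n(z)$ lies in $Y$ and coincides whether the limit is computed in $Y$ or in $X$; thus $f^p|_Y=(f|_Y)^p$, and likewise $(f^{-1})^p|_Y=\bigl((f|_Y)^{-1}\bigr)^p$. Combining this with the displayed identity, for every $z\in Y$ we obtain $\bigl(f^p\circ(f^{-1})^p\bigr)(z)=z$ and $\bigl((f^{-1})^p\circ f^p\bigr)(z)=z$; in particular both hold at $z=x$.

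Since every point of $X$ belongs to its own orbit closure, the equalities $f^p\circ(f^{-1})^p=1_X$ and $(f^{-1})^p\circ f^p=1_X$ hold on all of $X$. Hence $f^p$ and $(f^{-1})^p$ are mutually inverse bijections, that is $(f^p)^{-1}=(f^{-1})^p$, which is what we wanted.

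The only step demanding any care is the compatibility of $p$-limits with passing to the closed invariant subspace $Y$, and this is immediate from the definition of a $p$-limit together with the facts that $Y$ is compact (so the limit exists in $Y$) and closed in $X$ (so it is the same limit in $X$); everything else in the argument is purely formal, so I do not expect a genuine obstacle here.
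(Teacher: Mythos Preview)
Your proof is correct and follows essentially the same approach as the paper: restrict to the orbit closure $Y=\overline{\mathcal O^{\Z}_f(x)}$, note that $f^p\restriction Y=(f\restriction Y)^p$, and apply Lemma~\ref{inverse-orbita-densa}. The only cosmetic difference is that the paper separates out the periodic case and handles it via Lemma~\ref{p-inversos2}, whereas you treat periodic and non-periodic points uniformly; since a finite orbit closure is still a compact metric equicontinuous system with a dense orbit, Lemma~\ref{inverse-orbita-densa} applies there as well, so your streamlined version is fine.
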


\proof 
Let $x\in X$ we need to show that $(f^p\circ (f^{-1})^p)(x)= ((f^{-1})^p\circ f^p)(x)=x$.
If $x$ is periodic, the result follows from Lemma \ref{p-inversos2}. Suppose then that $x$ is not periodic. Since $f$ is injective, then the $\Z$-orbit of $x$ is infinite. Let $Y$ be $\overline{\{f^n(x):\; n\in\Z\}}$ and $g=f\restriction Y$.  By Lemma \ref{inverse-orbita-densa},  the result holds for $(Y,g)$. To finish the proof it suffices to observe that $f^p\restriction Y=g^p$ for every $p\in \beta\nat$. 
\endproof

We have seen in Lemma \ref{p-lim} that the Ellis semigroup is abelian when the system is WAP. But this is not an equivalence  (see Example \ref{abelianNonWAP}). It is natural then to wonder whether  the formula $(f^p)^{-1} = (f^{-1})^p$ is still valid for a distal cascade with a commutative Ellis semigroup.  The answer to this question is affirmative as we show next. We need the following result from  \cite{Downarowicz1998}, we give a proof of it for the sake of completeness.

\begin{teo}\cite[Fact 2.]{Downarowicz1998}\label{abelian 1.}
Let $X$ be a compact metric space and $f: X \to X$ be  a homeomorphism such that the $\Z$-orbit of $x$ is dense on $X$. The following are equivalent.
\begin{enumerate}
    \item[(i)] $E(X,f, \Z)$ is commutative.
    \item[(ii)] $(X,f)$ is WAP.
\end{enumerate}
\end{teo}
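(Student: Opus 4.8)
The plan is to prove the two implications separately; the density hypothesis on the $\Z$-orbit of $x$ is needed for only one of them. The implication $(ii)\Rightarrow(i)$ is immediate from what has already been established: by definition $(X,f)$ is WAP exactly when every element of $E(X,f,\Z)$ is continuous, and Lemma \ref{p-lim}(iv) states precisely that $E(X,f,\Z)$ is abelian whenever $(X,f)$ is WAP (here one uses that $f$ is a homeomorphism). So the substantive direction is $(i)\Rightarrow(ii)$.

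For $(i)\Rightarrow(ii)$, assume $E(X,f,\Z)$ is commutative and fix an arbitrary $g\in E(X,f,\Z)$; the goal is to show that $g\colon X\to X$ is continuous. I would verify continuity through nets: if $(y_\alpha)_\alpha$ is a net in $X$ converging to $y$, I want $g(y_\alpha)\to g(y)$, and for this it suffices to show that every subnet of $(g(y_\alpha))_\alpha$ admits a further subnet converging to $g(y)$ --- a standard criterion valid in any topological space. The structural fact that makes the argument work is the density hypothesis: since $\overline{\mathcal{O}^\Z_f(x)}=X$, the evaluation map $h\mapsto h(x)$ sends $E(X,f,\Z)$ onto $X$, so I may write $y_\alpha=h_\alpha(x)$ with $h_\alpha\in E(X,f,\Z)$.

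Given a subnet of $(g(y_\alpha))_\alpha$ --- which, after relabelling, I may take to be the whole net --- I would use compactness of $E(X,f,\Z)$ (it is a closed subspace of the compact space $X^X$) to pass to a further subnet along which $h_\alpha\to h^\ast$ pointwise for some $h^\ast\in E(X,f,\Z)$. Evaluating at $x$ gives $y_\alpha=h_\alpha(x)\to h^\ast(x)$, while $y_\alpha\to y$, so $h^\ast(x)=y$. Writing $z_0=g(x)$ and using commutativity,
\[
g(y_\alpha)=(g\circ h_\alpha)(x)=(h_\alpha\circ g)(x)=h_\alpha(z_0)\longrightarrow h^\ast(z_0)=(h^\ast\circ g)(x)=(g\circ h^\ast)(x)=g(y),
\]
where the convergence is just pointwise convergence $h_\alpha\to h^\ast$ evaluated at the fixed point $z_0$. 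Hence this sub‑subnet of $(g(y_\alpha))_\alpha$ converges to $g(y)$, which is what was needed.

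The place where care is required --- and the reason one is forced to work with nets --- is that the evaluation $h\mapsto h(x)$ is typically far from injective, so one cannot lift $y_\alpha\to y$ directly to convergence of $(h_\alpha)$ in $E(X,f,\Z)$: one only recovers convergence along a subnet, to some $h^\ast$ that need not be any preassigned lift of $y$. Moreover $E(X,f,\Z)$ need not be metrizable, so sequential compactness is unavailable and genuine nets are needed. Commutativity of $E(X,f,\Z)$ enters at exactly one decisive step: it converts $g\circ h_\alpha$ into $h_\alpha\circ g$, moving $g$ to the inside and reducing everything to the automatic pointwise convergence $h_\alpha(z_0)\to h^\ast(z_0)$.
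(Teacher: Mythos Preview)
Your proof is correct and follows essentially the same route as the paper's: represent each $y_\alpha$ as $h_\alpha(x)$ using density of the orbit, pass to a convergent subnet in the compact semigroup $E(X,f,\Z)$, and use commutativity to move $g$ past $h_\alpha$. The only notable difference is that the paper works with sequences $(y_n)$ in $X$ (legitimate since $X$ is metric) while you work with general nets throughout; your remark that $E(X,f,\Z)$ need not be metrizable, so a genuine subnet is required when extracting $h_\alpha\to h^\ast$, is a point the paper handles somewhat informally.
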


\proof By  Lemma \ref{p-lim}, it remains to show that  (i) implies (ii). 
Let $p \in \beta\nat$ and $y\in X$. Let $(y_n)_{n \in \nat}$ be a sequence in $X$  converging to $y$, we want to see that $f^p(y_n) \to f^p(y)$ and also that $({f^{-1}})^p(y_n) \to (f^{-1})^p(y)$. Both claims  are treated analogously and we show  only the first one.
There exists a sequence $(p_k)_k$ in $\beta\nat$ such that  $f^{p_k}(x) = y_k$ for all $k$. As $E(X,f, \Z)$ is compact, we assume that $(p_k)_k$ is a net such that  $f^{p_k} \to f^{q}$ for some  $q \in \beta\nat$. In particular, $f^{p_k}(x) \to f^{q}(x)$ and therefore $f^q(x)=y$. Since the Ellis semigroup is commutative, we have that
$$
f^p(y_{k}) = f^p( f^{p_k}(x)) = f^{p_k}(f^p(x)) \to f^{q}(f^p(x)) = f^p(f^{q}(x))= f^p(y).
$$
This implies that $f^p(y_n) \to f^p(y)$, so the system is WAP.
\endproof

\begin{teo}
Let $(X,f)$ be a dynamical system with $f: X\to X$ a homeomorphism such that $(X,f)$ is distal. If $E(X,f, \Z)$ is abelian, then $(f^p)^{-1}= (f^{-1})^p$  for each $p \in \beta\nat$.
\end{teo}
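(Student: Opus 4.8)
The plan is to reduce the statement to the equicontinuous case already settled in Theorem~\ref{formulainverso}, by passing to the closure of the $\Z$-orbit of a single point. Fix $x\in X$; it suffices to prove that $(f^p\circ (f^{-1})^p)(x)=((f^{-1})^p\circ f^p)(x)=x$ for every $p\in\beta\nat$. Set $Y=\overline{\mathcal O^\Z_f(x)}$. This is a closed $f$-invariant set, hence also $f^{-1}$-invariant since $f[Y]=Y$, so $g:=f\restriction Y$ is a homeomorphism of $Y$ and $(Y,g)$ is distal. Because $(X,f)$ is distal, $x$ is an almost periodic point, so $Y$ is a minimal set and therefore $x$ has dense $\Z$-orbit in $Y$.

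Next I would record two routine facts about restriction. First, the map $r\colon h\mapsto h\restriction Y$ is well defined on $E(X,f,\Z)$: any $h\in E(X,f,\Z)$ is a pointwise limit of a net $(f^{n_i})_i$, and since $Y$ is closed and invariant, $h[Y]\subseteq Y$; moreover $r$ is continuous for the product topologies, is a semigroup homomorphism, and $r(f^n)=g^n$, so $r$ maps $E(X,f,\Z)$ onto $\overline{\{g^n:n\in\Z\}}=E(Y,g,\Z)$. Since $E(X,f,\Z)$ is abelian, its homomorphic image $E(Y,g,\Z)$ is abelian as well. Second, $f^p\restriction Y=g^p$ and $(f^{-1})^p\restriction Y=(g^{-1})^p$ for every $p\in\beta\nat$, because the terms $f^n(y)$ (resp.\ $f^{-n}(y)$) for $y\in Y$ all lie in the closed set $Y$, so their $p$-limit, computed in $X$, lies in $Y$ and equals the $p$-limit computed in $Y$.

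Now I would invoke Theorem~\ref{abelian 1.} applied to the system $(Y,g)$: the $\Z$-orbit of $x$ is dense in $Y$ and $E(Y,g,\Z)$ is commutative, hence $(Y,g)$ is WAP. Being simultaneously distal and WAP, $(Y,g)$ is equicontinuous (see \cite{Auslander1988} page~69). Theorem~\ref{formulainverso} then yields $(g^p)^{-1}=(g^{-1})^p$ for every $p\in\beta\nat$, that is, $g^p\circ (g^{-1})^p=(g^{-1})^p\circ g^p=\mathrm{Id}_Y$. Combining this with the restriction identities of the previous paragraph gives $(f^p\circ (f^{-1})^p)(x)=(g^p\circ (g^{-1})^p)(x)=x$ and, symmetrically, $((f^{-1})^p\circ f^p)(x)=x$. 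Since $x\in X$ was arbitrary, $(f^p)^{-1}=(f^{-1})^p$ for all $p\in\beta\nat$.

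I do not expect a genuine obstacle here: the theorem is in essence a corollary of Theorems~\ref{abelian 1.} and \ref{formulainverso} together with the fact that orbit closures in a distal system are minimal. The only point demanding a little care is the bookkeeping around the restriction map—that $r$ is a well-defined semigroup homomorphism with image $E(Y,g,\Z)$, and that $p$-iteration commutes with restriction to a closed invariant subset—but both are straightforward consequences of the definitions and of Lemma~\ref{p-lim}(i).
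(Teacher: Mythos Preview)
Your proof is correct and follows essentially the same route as the paper: restrict to $Y=\overline{\mathcal O^\Z_f(x)}$, transfer commutativity of the enveloping semigroup to $(Y,g)$, apply Theorem~\ref{abelian 1.} to obtain WAP, combine with distality to get equicontinuity, and then invoke Theorem~\ref{formulainverso}. One small remark: the detour through almost periodicity and minimality is unnecessary, since $x$ has dense $\Z$-orbit in $Y$ simply by the definition of $Y$ as the closure of that orbit.
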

\proof
Fix $x \in X$ and consider $Y=\overline{\left\{{f^n(x) : n \in \Z}\right\}}$ and let $g = f\restriction Y$. Then $E(Y, g,  \Z)= \{h\restriction Y: h\in E(X,f,\Z)\}$. In fact, one inclusion is obvious as  $f[Y]= Y$. For the other direction, let $p\in \beta\nat$.  Then $g^p=f^p\restriction Y$. Thus   $E(Y, g,  \Z)$ is abelian  and  $(Y,g)$ has a dense orbit, thus  by  Theorem \ref{abelian 1.}, $(Y,g)$ is a WAP system.

Since $(Y,g)$ is distal and WAP, $(Y,g)$ is  equicontinuous  (see \cite{Auslander1988} page 69). Thus, by Theorem \ref{formulainverso} we have $(g^{-1})^p\circ g^p = 1_Y$ for each $p \in \beta\nat$. In particular, as $x\in Y$, 
$x= (g^{-1})^p(g^p(x))= (f^{-1})^p(f^p(x))$.
As $x$ was chosen arbitrarily, $(f^p)^{-1}= (f^{-1})^p$ for any ultrafilter $p$.
\endproof

We do not know whether the previous result holds in general for any distal system. 
Next we give an example of a non WAP distal dynamical system whose Ellis semigroup is abelian. It is the well known system of rotating a disk at different rates (see \cite{Auslander1988} page 65).

\begin{ejem}
\label{abelianNonWAP}
Let $X = \left\{{(x,y) \in \R^2: x^2 +y^2 \leq 2\pi}\right\}$ (a disk on the plane) and let $f:  X\to X$ be given by $f(r,\theta) = (r, \theta+r)$ (in polar coordinates).  Let $S^r$ be the circle of radius $r$  and $R_r:S^r\to S^r$ the rotation with angle $r$, i.e. $R_r(\theta)=f(r,\theta)$.  It is well known that  $R_r$ is  equicontinuous for each $r$. As  $(S^r, R_r)$ is equicontinuous, then it is WAP and thus $E(S^r, R_r)$ is abelian.  Notice also that $S^r$ is invariant under $f$ and moreover  $R_r^p=f^p\restriction S^r$ for each $p\in \beta\nat$ and $0\leq r\leq 2\pi$.  Thus $E(X,f)$ is abelian. 
For the sake of completeness we show that $E(X,f)$ is not WAP.  Let $(n_k)_k$ be an increasing sequence of prime numbers. By the Chinese remainder theorem, the collection $\{(4n_k\nat +2n_k)^*:\;k\in \nat\}$ has the finite intersection property, thus there is  $p \in \bigcap_{k \in \nat}{(4n_k\nat +2n_k)^* }$. We claim that $f^p$ is not continuous. In fact,  the sequence $((\pi +\frac{\pi}{2n_k}, 0))_{k \in \nat}$ converges to $(\pi, 0)$. Each $(\pi +\frac{\pi}{2n_k}, 0)$ has period $4n_k$ and $(\pi, 0)$ has period $2$. 
By Lemma \ref{GRU1} we have $f^p(\pi +\frac{\pi}{2n_k}, 0) = f^{2n_k}(\pi +\frac{\pi}{2n_k}, 0)= (\pi +\frac{\pi}{2n_k}, \pi)$ and $f^p(\pi, 0) = f^2(\pi, 0) = (\pi, 0)$. This shows that $f^p$ is not continuous.
\end{ejem}

When $X$ is countable we have shown that in any distal system $(X,f)$ every point is periodic, thus by Lemma \ref{p-inversos2} $(f^p)^{-1}\in E(X,f,\Z)$  for all $p\in \beta\nat$, that is to say $E(X,f,\Z)$ is a group. This provides a proof of the non trivial part of the following well known theorem. 

\begin{teo} (Ellis)
\label{group}
Let $X$ be a compact metric countable space and  $f:X\to X$ be a homeomorphism. The following are equivalent. 
\begin{itemize}
\item[(i)] $(X,f)$ is distal.

\item[(ii)]  $E(X,f,\Z)$ is a group.
\end{itemize}
\end{teo}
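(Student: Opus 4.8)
The plan is to dispatch the two implications separately; nearly all of the substance has been established in the preceding sections, so what remains is to assemble it.

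For the direction $(ii)\Rightarrow(i)$, which in fact needs no countability assumption, I would argue directly from the characterization recalled in Section~\ref{preliminaries}: $(X,f)$ is distal if, and only if, every function in $E(X,f,\Z)$ is injective. So let $g\in E(X,f,\Z)$. If $E(X,f,\Z)$ is a group, there is $h\in E(X,f,\Z)$ with $g\circ h=h\circ g=1_X$; thus $g$ has a two-sided inverse as a self-map of $X$, hence is a bijection and in particular injective. Since $g$ was arbitrary, every element of $E(X,f,\Z)$ is injective, so $(X,f)$ is distal.

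For the direction $(i)\Rightarrow(ii)$ I would invoke the machinery developed for countable spaces. Assume $(X,f)$ is distal. By Corollary~\ref{distal2} the system is positively distal, so Theorem~\ref{distal} gives that every point of $X$ is periodic. Recall that
\[
E(X,f,\Z)=\{f^p:\; p\in\beta\nat\}\cup\{(f^{-1})^p:\; p\in\beta\nat\},
\]
and that $1_X=f^0$ belongs to this set. Fix $p\in\beta\nat$. Since every point of $X$ is periodic, Lemma~\ref{p-inversos2} applies at every $x\in X$ and yields
\[
f^p\circ(f^{-1})^p=(f^{-1})^p\circ f^p=1_X,
\]
so $(f^p)^{-1}=(f^{-1})^p\in E(X,f^{-1},\nat)\subseteq E(X,f,\Z)$; applying the same lemma with $f^{-1}$ in place of $f$ shows that each $(f^{-1})^p$ is likewise invertible inside $E(X,f,\Z)$. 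Together with the associativity of composition and the presence of the identity $1_X$, this shows that $E(X,f,\Z)$ is a group.

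The only genuinely hard point is already behind us: it is Theorem~\ref{distal}, the equivalence of distality with ``every point is periodic'' in the countable setting, which in turn rests on the Cantor--Bendixson analysis of Lemmas~\ref{cbrank} and~\ref{no-distal}. Given that theorem, the argument above is pure bookkeeping: one only observes that periodicity of every point makes Lemma~\ref{p-inversos2} globally applicable, so the formula $(f^p)^{-1}=(f^{-1})^p$ holds for all $p$ and witnesses invertibility of every element of $E(X,f,\Z)$. I would also note explicitly that the converse implication $(ii)\Rightarrow(i)$ uses no hypothesis on $X$, so the sole place where countability enters is through Theorem~\ref{distal}; this is exactly the ``direct proof of Ellis's theorem'' promised in the introduction.
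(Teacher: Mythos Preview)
Your proof is correct and follows essentially the same route as the paper: for $(i)\Rightarrow(ii)$ you pass from distality to ``every point is periodic'' via Theorem~\ref{distal} and then apply Lemma~\ref{p-inversos2} pointwise to exhibit $(f^{-1})^p$ as the inverse of $f^p$, exactly as the paper does in the paragraph preceding the theorem; the easy direction $(ii)\Rightarrow(i)$ is the same injectivity observation the paper leaves implicit. One small remark: invoking Corollary~\ref{distal2} is an unnecessary detour, since distal trivially implies positively distal (the inclusion $E(X,f,\nat)\subseteq E(X,f,\Z)$ suffices), but this does not affect correctness.
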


%\proof
%Suppose $(X,f)$ is distal. Then by Theorem \ref{distal} every point is periodic.  By Lemma \ref{p-inversos2} we have that   \eqref{p-inversos} holds and thus every element of $E(X,f,\Z)$ has an inverse. Hence $E(X,f,\Z)$ is a group. Conversely, suppose $E(X,f,\Z)$ is a group. Given $p\in \nat^*$, then $f^p$ has an inverse and thus it is injective. So $(X,f)$ is distal. 
%\endproof

The group $E(X,f,\Z)$ for a distal $f$ can be described as follows. 

\begin{teo}
\label{grupoEllis}
Let $X$ be a compact metric countable space and  $(X,f)$ be a distal dynamical system. Then there is a continuous group monomorphism $\varphi: E(X,f,\Z)\to \prod_{n\in P_f}\mathbb{Z}_n$, where  $\mathbb{Z}_n$ is the group of integers module $n$ with the  discrete topology and  $\prod_{n\in P_f}\mathbb{Z}_n$ has the product topology.
\end{teo}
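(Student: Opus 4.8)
The plan is to exploit that, by Corollary~\ref{distal2} and Theorem~\ref{distal}, every point of $X$ is periodic, so that $X$ is the union of the finite orbits $\mathcal{O}_f(x)$ and, by Theorem~\ref{ZNdistality} together with the ultrafilter representation of the preliminaries, $E(X,f,\Z)=E(X,f,\nat)=\{f^p:\ p\in\beta\nat\}$. For each $n\in P_f$ fix once and for all a point $x_n\in X$ of period $n$. The key elementary observation is that for every $h=f^p\in E(X,f,\Z)$ and every $n\in P_f$ there is a unique integer $0\le k<n$ such that $h(z)=f^k(z)$ for \emph{all} $z\in X$ of period $n$: if $p$ is non-principal and $p\in(n\nat+r)^*$ then $k=r$ by Lemma~\ref{GRU1}, while if $p$ is principal, say $p=m$, then $k$ is the remainder of $m$ modulo $n$. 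Write $\kappa_n(h)$ for this integer and define
\[
\varphi\colon E(X,f,\Z)\longrightarrow\prod_{n\in P_f}\mathbb{Z}_n,
\qquad \varphi(h)=\bigl(\kappa_n(h)\bigr)_{n\in P_f}.
\]

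I would first verify that $\varphi$ is a group homomorphism. Given $h_1,h_2\in E(X,f,\Z)$ and $x\in X$ of period $n$, the point $f^{\kappa_n(h_2)}(x)$ again has period $n$, so $(h_1\circ h_2)(x)=h_1\bigl(f^{\kappa_n(h_2)}(x)\bigr)=f^{\kappa_n(h_1)+\kappa_n(h_2)}(x)$; comparing with $(h_1\circ h_2)(x)=f^{\kappa_n(h_1\circ h_2)}(x)$ and using that $x$ has period exactly $n$ gives $\kappa_n(h_1\circ h_2)\equiv\kappa_n(h_1)+\kappa_n(h_2)\pmod n$, while $\kappa_n(1_X)=0$. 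Injectivity is then immediate: if $\varphi(h_1)=\varphi(h_2)$ and $x\in X$ has period $n\in P_f$, then $h_1(x)=f^{\kappa_n(h_1)}(x)=f^{\kappa_n(h_2)}(x)=h_2(x)$, so $h_1=h_2$. Incidentally, this reproves that $E(X,f,\Z)$ is abelian for countable distal systems, since it embeds into an abelian group.

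The remaining, and only slightly delicate, point is the continuity of $\varphi$. Since $\prod_{n\in P_f}\mathbb{Z}_n$ carries the product of the discrete topologies, it suffices to show that for each $n\in P_f$ and each $0\le k<n$ the set $\kappa_n^{-1}(k)=\{h\in E(X,f,\Z):\ h(x_n)=f^k(x_n)\}$ is open in $E(X,f,\Z)$ — the displayed equality holding precisely because $\kappa_n(h)$ does not depend on which point of period $n$ is used. The evaluation $h\mapsto h(x_n)$ is continuous for the topology of pointwise convergence on $E(X,f,\Z)$, and $\{f^k(x_n)\}$ is closed in $X$, so each $\kappa_n^{-1}(k)$ is closed; but the $n$ sets $\kappa_n^{-1}(0),\dots,\kappa_n^{-1}(n-1)$ are pairwise disjoint and cover $E(X,f,\Z)$, because $h(x_n)$ always lies in the finite orbit $\mathcal{O}_f(x_n)$. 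Hence each $\kappa_n^{-1}(k)$ is also the complement of a finite union of closed sets, so it is clopen, every coordinate of $\varphi$ is continuous, and therefore $\varphi$ is continuous. I expect this last step to be the main obstacle: a periodic point need not be isolated in $X$, so $\{f^k(x_n)\}$ need not be open and one cannot conclude openness directly; instead one uses that finitely many pairwise disjoint closed sets covering a space are automatically clopen.
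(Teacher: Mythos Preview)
Your proof is correct and follows essentially the same approach as the paper: the map $\varphi$ is the paper's $(r(p,n))_{n\in P_f}$, and your verifications of the homomorphism property, injectivity, and continuity parallel the paper's, with your continuity argument (finitely many disjoint closed sets covering the space are clopen) being a slightly more explicit version of the paper's observation that $(\pi_m\circ\varphi)(f^p)=r$ is equivalent to $f^p(x_m)=f^r(x_m)$.
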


\proof Let $(r_n)_{n\in P_f}$ with $r_n\in \mathbb{Z}_n$. Suppose $  p,q\in \bigcap_{n\in P_f}(n\nat+r_n)^{\wedge}$. 
Since every point is periodic,  $f^p=f^q$ (see  Lemma \ref{GRU1}).  For each $p\in \beta\nat$ and $n\in P_f$, let $r(p,n)\in \mathbb{Z}_n$ be such that $p\in (n\nat+r(p,n))^{\wedge}$.  Notice that for any $p, q\in \beta\nat$, $f^p=f^q$  if, and only if, $r(p,n)=r(q,n)$ for all $n\in P_f$. Hence the map  $\varphi:E(X,f,\Z)\to \prod_{n\in P_f}\mathbb{Z}_n$ given by 
\[
\varphi(f^p)= (r(p,n))_{n\in P_f} 
\]
is well  defined and clearly injective.

Let us check that  $\varphi$ is continuous. For each   $m\in P_f$,  let $x_m\in X$ be a point with period $m$ and $\pi_m: \prod_{n\in P_f}\mathbb{Z}_n\to \mathbb{Z}_m$  be the $m^{th}$-projection map. Fix $r\in \mathbb{Z}_m$. Then   $(\pi_m\circ \varphi) (f^p) = r$ if, and only if, $p\in (m\nat+r)^{\wedge}$, and this last claim is clearly equivalent to say that   $f^p(x_m)=f^r(x_m)$ (see  Lemma \ref{GRU1}). 
Hence $\pi_m\circ \varphi$ is continuous for all $m$. Thus $\varphi$ is continuous. 

Finally, we check  that $\varphi$ is a homomorphism. 
Let $n\in P_f$ and  $x\in X$ with period $n$. Suppose  $r(p,n)= r$ and $r(q,n)=s$. Then $f^q(y)=f^s(y)$ and $f^p(y)=f^r(y)$ for all $y$ with period $n$. In particular, since $f^q(x)$ has period $n$, $f^p(f^q(x))= f^{r+s}(x)$.  Now notice that if $r+s\equiv t \mod n$, then $f^{r+s}(x)=f^t(x)$. This shows that  
$\varphi(f^p\circ f^q)(n)=\varphi(f^p)(n)+\varphi(f^q)(n)$. 

\endproof

The map $\varphi$ defined in the proof of the previous theorem is not necessarily  onto. For instance, let $f$ be a distal map on $X=\omega +1$  such that $P_f$ is $\{2^n:\; n\geq 1\}$. Let $r_n$ be such that $0\leq r_n< 2^n$ for all $n$ but in addition $r_4=3$ and $r_{16}=9$. Since $(4\nat+3)\cap (16\nat +9)=\emptyset$, there is no $p\in \beta\nat$ such that $\varphi(f^p)=(r_n)_n$. 

When $P_f$ is infinite, $E(X,f,\nat)$ is homeomorphic to $\cantor$ (see Theorem 2.7 of \cite{GRU2016}). 
It is an open problem whether the converse is also true. We can give a partial answer.   Indeed, it follows immediately from Theorem \ref{grupoEllis} that when $(X,f)$ is distal and $P_f$ is finite,  $E(X,f,\nat)$ is finite. 
When $P_f$ and $P_g$ are infinite, the corresponding Ellis semigroups are both homeomorphic to $\cantor$ but they are not necessarily isomorphic. For instance, let $f$ and $g$ be distal maps on $X=\omega +1$ such that  $P_f=\{2^n:\; n\geq 1\}$ and   $P_g=\{3^n:\; n\geq 1\}$
we have that $E(X,f,\nat)$ has  elements of order 2 but 
$E(X,g)$ does not.

\section{Equicontinuous cascades}

We show in this section that for countable spaces distality suffices to get equicontinuity.  We say that a sequence of subsets $(A_n)_n$ of $X$ converges to a set $A$, and we write  $A_n\to A$,  if for every open set $V\supseteq A$, there is $n$ such that $ V\supseteq A_m$ for all $m\geq n$.  A key fact we need is the following result from \cite{GRU2015} which was proved by induction on the cb-rank of $X$. 

\begin{lema}
\label{equico1}
\cite[Corollary 3.5]{GRU2015}
Let $X$ be a compact metric countable space and $f:X\to X$ be continuous such that every point of $X'$ is  periodic. If
$(x_n)_n$ is a sequence of periodic points 
converging to $x$, then $\mathcal O_f(x_n) \to \mathcal O_f(x)$.
\end{lema}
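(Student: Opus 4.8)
The plan is to induct on the Cantor--Bendixson rank of $X$, after two reductions. First I would reformulate the conclusion: since $x$ is periodic, $\mathcal O_f(x)$ is finite, hence closed, and for compact $X$ one checks easily that $\mathcal O_f(x_n)\to\mathcal O_f(x)$ is equivalent to the statement that every cluster point of every sequence $(y_n)$ with $y_n\in\mathcal O_f(x_n)$ lies in $\mathcal O_f(x)$; so the goal becomes to rule out an escaping cluster point of orbit points. Second, let $A$ be the set of periodic points of $X$. The hypothesis that every point of $X'$ is periodic makes $A$ closed (a limit of pairwise distinct periodic points lies in $X'$, hence is periodic), $f$-invariant, and $f\restriction A$ a homeomorphism (it is a continuous surjection of a compact set, and it is injective because if $y\ne y'$ are periodic with $f(y)=f(y')$ then $y,y'$ lie in a common orbit, which quickly yields a contradiction). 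Since $x$ and all $x_n$ lie in $A$, all relevant orbits lie in $A$, and convergence in $A$ implies convergence in $X$; so we may assume throughout that $f$ is a homeomorphism and every point of $X$ is periodic. Then $f$ preserves the cb-rank of points, so every derivative $X^{(\beta)}$ is $f$-invariant.

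Now induct on $n=cbr(X)$, the case $n=1$ (i.e.\ $X$ finite) being trivial. For the inductive step, suppose toward a contradiction that $y_n\in\mathcal O_f(x_n)$, $y_n\to y$, and $y\notin\mathcal O_f(x)$, and pass to subsequences. If $x$ is isolated, then $x_n=x$ eventually and $y\in\mathcal O_f(x)$, a contradiction. If infinitely many $x_n$ have positive cb-rank, then those $x_n$, their orbits, and $y$ all lie in the closed invariant set $X'$, whose cb-rank is $n-1$, and the induction hypothesis applied to $(X',f\restriction X')$ gives $y\in\mathcal O_f(x)$. So we may assume all $x_n$ are isolated; if moreover their periods stay bounded, then along a subsequence the periods are a constant $P$ and the exponents realizing $y_n=f^{t_n}(x_n)$ are a constant residue $s$ modulo $P$, whence $y_n=f^{s}(x_n)\to f^{s}(x)\in\mathcal O_f(x)$ by continuity of $f^{s}$ — again a contradiction. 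Thus the only case left is: the $x_n$ are pairwise distinct isolated points with periods tending to $\infty$, $x\in X'$, and $y\notin\mathcal O_f(x)$.

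This last case is the technical heart. Here I would first reduce (replacing $f$ by a suitable power) to the situation where $x$, and in fact every point of the finite top level $M=X^{(n-1)}$, is a fixed point, the unbounded-periods property surviving. Then consider the closed invariant subspace $Z=\overline{\bigcup_n\mathcal O_f(x_n)}$, which still satisfies the standing hypotheses and contains $x$ and $y$: if $cbr(Z)<n$ the induction hypothesis applied to $Z$ finishes the proof, and otherwise a diagonalization shows that $Z$ meets $M$ and reduces us to the same critical configuration but with the limit point now lying in $M$. Treating that residual configuration — $x\in M$ a fixed point, $x_n$ pairwise distinct isolated points of unbounded period converging to $x$, and $y\ne x$ — is the crux, and it is where the countability of $X$ is genuinely used (for an uncountable $X$, e.g.\ a disk rotated at radius-dependent speeds, periodic orbits can converge while their orbit sets do not): one chooses disjoint clopen neighbourhoods $U\ni x$ and $U'\ni y$ each meeting $M$ in at most one point, lets $s_n\ge 1$ be least with $f^{s_n}(x_n)\notin U$, and passes to a cluster point $w$ of $(f^{s_n-1}(x_n))_n\subseteq U$; continuity of $f$ forces $w\ne x$ (hence $cbr(w)<n-1$) together with $f(w)\notin U$, and iterating this localization produces a contradiction. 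The hard part, I expect, will be exactly this final localization step: it is what blocks a one-step argument and makes the induction on $cbr(X)$ necessary, and it is the content of \cite[Corollary 3.5]{GRU2015}.
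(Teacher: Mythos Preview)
The paper does not give its own proof of this lemma; it is quoted from \cite{GRU2015} with only the remark that the argument there is by induction on the Cantor--Bendixson rank. Your outline follows that strategy, and your reductions --- passing to the closed set of periodic points so that $f$ becomes a homeomorphism with every point periodic, discarding the cases where $x$ is isolated, where infinitely many $x_n$ lie in $X'$, or where the periods stay bounded, and then running the first-escape-time construction to produce $w\in U$ with $f(w)\notin U$ and $w\neq x$ --- are all correct.

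The gap is in closing the induction. Once you have $w$ with $cbr(w)<cbr(x)$, the assertion that ``iterating this localization produces a contradiction'' is not justified: the orbits do not change, since $\mathcal O_f(f^{s_n-1}(x_n))=\mathcal O_f(x_n)$, the ambient space $X$ does not shrink, and no evident ordinal quantity decreases under your ``iteration''. Your closing appeal to \cite[Corollary~3.5]{GRU2015} is then circular --- that is exactly the statement being proved. (The intermediate step ``a diagonalization reduces to $x\in M$'' is also only asserted; it can be justified, but it is not needed once the induction is organized correctly.)

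The clean way to finish is to induct on $cbr(x)$ rather than on $cbr(X)$. Choose the clopen $V\supseteq\mathcal O_f(x)$ with $y\notin V$ so that in addition $V\cap X^{(\alpha)}=\mathcal O_f(x)$, where $\alpha=cbr(x)$; this is possible because each point of $\mathcal O_f(x)$ is isolated in $X^{(\alpha)}$. Your first-escape point $w\in V\setminus\mathcal O_f(x)$ then satisfies $cbr(w)<\alpha$. Now apply the induction hypothesis directly to the sequence of periodic points $f^{s_n-1}(x_n)\to w$: it gives $\mathcal O_f(x_n)=\mathcal O_f(f^{s_n-1}(x_n))\to\mathcal O_f(w)$. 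Since $x_n\in\mathcal O_f(x_n)$ and $x_n\to x$, this forces $x\in\mathcal O_f(w)$, contradicting $cbr(x)=\alpha>cbr(w)$. No iteration is needed; one application of the induction hypothesis suffices.
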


We recall that every countable metric space is zero-dimensional (see e.g. \cite[Corollary 6.2.8]{Engelking}).  Given a distal homeomorphism $f:X\to X$ and  a point $x\in X$ with period $k$, for the next lemmas, we fix pairwise disjoint clopen sets $V_0, V_1,\cdots, V_{k-1}$  such that $\mathcal{O}_f(x)\cap V_r=\{f^r(x)\}$ for every $0\leq r\leq k-1$. Let   $V$ be $V_0\cup\cdots\cup V_{k-1}$.

\begin{lema}
\label{equico2}
Let $X$ be a compact metric countable space and $f:X\to X$  be a distal homeomorphism. 
Let $x_n\to x$ and $0\leq r\leq k-1$. Suppose $(n_i)_i$  and $(m_i)_i$ are sequences of natural numbers  such that $(n_i)_i$ is increasing and  $f^{m_i}(x_{n_i})\in V_r$  for  all $i$. If  $f^{m_i}(x_{n_i}) \to y$, then $y=f^r(x)$. 
\end{lema}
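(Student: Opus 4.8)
The plan is to exploit the convergence of orbits $\mathcal{O}_f(x_n)\to\mathcal{O}_f(x)$ guaranteed by Lemma \ref{equico1}, together with the fact that the $V_j$ are clopen and pairwise disjoint. First, I would note that since $(X,f)$ is distal and $X$ is countable, every point of $X$ is periodic by Theorem \ref{distal} and Corollary \ref{distal2}; in particular $x$ is periodic (with period $k$) and so is each $x_n$. Hence Lemma \ref{equico1} applies and gives $\mathcal{O}_f(x_n)\to\mathcal{O}_f(x)$. The set $V = V_0\cup\cdots\cup V_{k-1}$ is a clopen neighborhood of $\mathcal{O}_f(x)$, so there is $N$ such that $\mathcal{O}_f(x_n)\su V$ for all $n\geq N$. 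After discarding finitely many indices we may assume $\mathcal{O}_f(x_{n_i})\su V$ for all $i$, and in particular $f^{m_i}(x_{n_i})\in V_r$ is consistent with this.

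Next, I want to pin down the point $f^r(x)$ as the only possible limit. The key observation is that each $V_j$ is clopen, so $y = \lim_i f^{m_i}(x_{n_i})\in \overline{V_r} = V_r$. It therefore suffices to show $y$ has the right "label", i.e. that $y\in\mathcal{O}_f(x)$, since $\mathcal{O}_f(x)\cap V_r = \{f^r(x)\}$ by our choice of the $V_j$. To see $y\in\mathcal{O}_f(x)$: apply Lemma \ref{equico1} again, but now more carefully using the sequential convergence $\mathcal{O}_f(x_{n_i})\to\mathcal{O}_f(x)$. For any open $W\supseteq\mathcal{O}_f(x)$ there is $i_0$ with $\mathcal{O}_f(x_{n_i})\su W$ for all $i\geq i_0$; since $f^{m_i}(x_{n_i})\in\mathcal{O}_f(x_{n_i})\su W$ eventually, the limit $y$ lies in $\overline{W}$. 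As $X$ is zero-dimensional (every countable metric space is), we may take $W$ ranging over a neighborhood base of clopen sets around the finite set $\mathcal{O}_f(x)$; then $y\in W$ for all such $W$, whence $y\in\mathcal{O}_f(x)$. Combining with $y\in V_r$ gives $y = f^r(x)$.

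The step I expect to require the most care is the passage from "$\mathcal{O}_f(x_n)\to\mathcal{O}_f(x)$" to "$y\in\mathcal{O}_f(x)$": one must make sure the set-convergence notion (for every open $V\supseteq\mathcal{O}_f(x)$, eventually $\mathcal{O}_f(x_n)\su V$) genuinely forces the limit of points chosen from the $\mathcal{O}_f(x_{n_i})$ to land in the closure of $\mathcal{O}_f(x)$, and then to use that $\mathcal{O}_f(x)$ is finite and closed (being a periodic orbit) so its closure is itself. An alternative, perhaps cleaner, route avoiding zero-dimensionality is: $\mathcal{O}_f(x)$ is finite, say $\mathcal{O}_f(x) = \{f^0(x),\dots,f^{k-1}(x)\}$, so pick pairwise disjoint open sets $U_0,\dots,U_{k-1}$ with $f^j(x)\in U_j$ and $U_j\su V_j$; then $U = \bigcup_j U_j\supseteq\mathcal{O}_f(x)$ is open, so eventually $\mathcal{O}_f(x_{n_i})\su U$, and since $f^{m_i}(x_{n_i})\in V_r$ we get $f^{m_i}(x_{n_i})\in U\cap V_r = U_r$ eventually; hence $y\in\overline{U_r}$. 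Shrinking the $U_j$ to a neighborhood base at $f^r(x)$ forces $y = f^r(x)$. I would use whichever of these two formulations reads most smoothly in context.
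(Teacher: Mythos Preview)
Your proof is correct and follows essentially the same approach as the paper: both use Theorem \ref{distal} to get that every point is periodic, invoke Lemma \ref{equico1} to force $y\in\mathcal{O}_f(x)$, and then use $y\in V_r$ (since $V_r$ is clopen) together with $\mathcal{O}_f(x)\cap V_r=\{f^r(x)\}$ to conclude. The paper phrases the key step by contradiction (if $y\notin\mathcal{O}_f(x)$, separate by an open $W\supseteq\mathcal{O}_f(x)$ with $y\notin\overline{W}$ and contradict the orbit convergence), whereas you argue directly by shrinking neighborhoods, but this is the same idea.
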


\proof By Theorem \ref{distal}, every point is periodic. Suppose the conclusion does not hold. Then $y\nin \mathcal O_f(x)$. Let $W\su V$ be an open set such that $\mathcal{O}_f(x)\su W$ and $y\not\in\cl W$. Then  $f^{m_i}(x_{n_i})\nin W$ for almost all $i$, which  contradicts Lemma \ref{equico1}.
\endproof

Given  $x\in X$ and $k\in \nat$ with $k\geq 1$, a $k$-{\em segment of the orbit} of $x$ is a set of the form $\{f^m(x), \cdots, f^{m+k-1}(x)\}$ for some $m\in \nat$.

\begin{lema}
\label{equico3} Let $X$ be a compact metric countable space and $f:X\to X$ be  a distal homeomorphism. Let  $x\in X$ with period $k\geq 2$ and $x_n\to x$. There is $n_0$ such that   for all $n\geq n_0$ the following holds:
\begin{itemize}
\item[(i)]  $\{f^m(x_n), f^{m+1}(x_n)\}\not\subseteq V_r$ for all $m$ and  all $0\leq r\leq k-1$.

\item[(ii)]  $S\cap V_r\not = \emptyset$ for every $k$-segment $S$ of the orbit of $x_n$  and all $0\leq r\leq k-1$.

\item[(iii)] $f^m(x_n)\in V_r$, for all $m\in \nat$  with  $m\equiv r \mod k$.

%\item[(iv)] the period of $x_n$ is a multiple of $k$. 
\end{itemize}

\end{lema}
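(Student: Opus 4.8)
The plan is to use the convergence of orbits from Lemma \ref{equico1} (via Lemma \ref{equico2}), together with distality, to show that for large $n$ the orbit of $x_n$ is ``shadowing'' the orbit of $x$ in a very controlled way. First I would record the setup: by Theorem \ref{distal} every point is periodic, so each $x_n$ has some finite period, and $\{V_0,\dots,V_{k-1}\}$ are pairwise disjoint clopen sets with $f^r(x)\in V_r$, and $V=V_0\cup\cdots\cup V_{k-1}$. Since $x$ has period $k\ge 2$, the sets $V_r$ are genuinely distinct, and since $f$ is a homeomorphism, for each $r$ we have $f(V_r\cap\mathcal O_f(x))=\{f^{r+1}(x)\}\subseteq V_{r+1}$ (indices mod $k$).

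For (i): suppose toward a contradiction that for infinitely many $n$ there exist $m=m(n)$ and $r=r(n)$ (we may pass to a subsequence so that $r$ is constant) with $\{f^m(x_n),f^{m+1}(x_n)\}\subseteq V_r$. Applying Lemma \ref{equico2} to the sequences $f^{m(n)}(x_n)$ and $f^{m(n)+1}(x_n)$, after passing to convergent subsequences of both (possible by compactness, keeping $n_i$ increasing), both limits must equal $f^r(x)$. But then $f^{m(n)}(x_n)$ and $f(f^{m(n)}(x_n))=f^{m(n)+1}(x_n)$ both converge to $f^r(x)$, so by continuity of $f$ we get $f^{m(n)}(x_n)\to f^r(x)$ and $f(f^r(x))=f^{r+1}(x)=f^r(x)$, contradicting that $x$ has period $k\ge 2$. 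Hence (i) holds for all large $n$.

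For (iii): I would argue that for large $n$, consecutive points $f^m(x_n),f^{m+1}(x_n)$ that lie in $V$ must lie in consecutive $V_r$'s. Indeed, by Lemma \ref{equico1} we have $\mathcal O_f(x_n)\to\mathcal O_f(x)$, so there is $n_0$ with $\mathcal O_f(x_n)\subseteq V$ for $n\ge n_0$; thus every point of the orbit of $x_n$ lies in exactly one $V_r$. Define $\sigma(m)$ by $f^m(x_n)\in V_{\sigma(m)}$. By part (i), $\sigma(m+1)\ne\sigma(m)$. The key point is that $\sigma(m+1)=\sigma(m)+1\bmod k$: if not, then for infinitely many $n$ some transition $V_r\to V_s$ with $s\ne r+1$ occurs; passing to convergent subsequences of $f^m(x_n)\to y\in V_r$ and $f^{m+1}(x_n)\to y'\in V_s$, Lemma \ref{equico2} forces $y=f^r(x)$ and $y'=f^s(x)$, but continuity of $f$ gives $y'=f(y)=f^{r+1}(x)\in V_{r+1}$, so $s=r+1$, a contradiction. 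So $\sigma$ increments by $1$ mod $k$ along the orbit; after relabelling by choosing the starting index appropriately — or rather, observing that the orbit of $x_n$ is finite and cyclic and that $\sigma$ is a homomorphism-like increment, hence $\sigma$ is determined mod $k$ — we conclude $f^m(x_n)\in V_r$ whenever $m\equiv r\bmod k$, which is exactly (iii). (One must be mildly careful that the period of $x_n$ is a multiple of $k$ so that this is consistent; this follows since $\sigma$ returns to its starting value only after a multiple of $k$ steps.) Finally, (ii) is an immediate consequence of (iii): any $k$-segment $\{f^m(x_n),\dots,f^{m+k-1}(x_n)\}$ contains, by (iii), exactly one representative of each residue class mod $k$, hence meets each $V_r$.

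The main obstacle I anticipate is the bookkeeping in (iii): making rigorous the passage ``$\sigma$ increments by $1$ mod $k$, therefore $f^m(x_n)\in V_r$ for $m\equiv r$'' requires knowing that the labelling of the $V_r$'s is compatible with the starting point $x_n$, i.e. that the period of $x_n$ is divisible by $k$ and that we index so that $f^0(x_n)=x_n\in V_0$-analogue holds. This is where one genuinely uses that $x_n\to x$ (so $x_n\in V_0$ for large $n$, since $x\in V_0$ and $V_0$ is open), pinning down the phase; the rest is the compactness-plus-Lemma \ref{equico2} contradiction argument, which is routine once set up.
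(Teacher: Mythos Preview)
Your proof is correct and your argument for (i) is essentially identical to the paper's. Where you diverge is in the order and mechanism for (ii) and (iii). The paper proves (ii) directly by a separate contradiction argument (assuming some $k$-segment misses $V_r$, locating which $V_j$ contains its first point, and then shifting by the appropriate offset to land in $V_r$), and only then derives (iii) from (i) and (ii) together by first pinning down the initial $k$-segment $\{x_n,\dots,f^{k-1}(x_n)\}$ via continuity of the powers of $f$ at $x$, and then inducting segment by segment.

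Your route is more economical: you prove the single ``increment rule'' $\sigma(m+1)\equiv\sigma(m)+1\pmod k$ by exactly the same compactness-plus-Lemma~\ref{equico2}-plus-continuity contradiction, then anchor with $x_n\in V_0$ (since $x_n\to x\in V_0$ and $V_0$ is open) to get $\sigma(m)\equiv m\pmod k$, which is (iii); and (ii) drops out for free. This increment rule already contains (i) as the special case $s=r$, so your separate treatment of (i) is redundant but harmless. The only spot where your write-up wobbles is the aside about ``relabelling'' and the period of $x_n$ being a multiple of $k$: this is a consequence of your increment rule, not a hypothesis you need, and the phase is fixed not by relabelling but exactly by the observation you make at the end, that $x_n\in V_0$ for large $n$. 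Once you clean that sentence up, your argument is both correct and slightly more direct than the paper's.
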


\proof  By Lemma \ref{equico1},  we can assume that  $\mathcal O_f(x_n)\subseteq V$ for all $n$, where $V=V_0\cup\cdots \cup V_{k-1}$.

(i) Let $r\in\{0,\cdots, k-1\}$ and suppose, towards a contradiction,  there are sequences $(n_i)_i$ and  $(m_i)_i$ with $(n_i)_i$ increasing such that $f^{m_i}(x_{n_i}), f^{m_i+1}(x_{n_i})\in  V_r$  for all $i$.  By compactness and   Lemma \ref{equico2}, we can also assume that $f^{m_i}(x_{n_i})\to f^r(x)$.  Thus $f^{m_i+1}(x_{n_i})\to f^{r+1}(x)\in V_j$ for some $j\neq r$,  which contradicts that $f^{m_i+1}(x_{n_i})\in V_r$ for all $i$. 

(ii) By a way of a contradiction, suppose  there are $0\leq r\leq k-1$ and   sequences $(n_i)_i$ ,   $(m_i)_i$ with $(n_i)_i$ increasing such that
\[
S_i=\{f^{m_i}(x_{n_i}), \cdots, f^{m_i+k-1}(x_{n_i})\}
\]
is disjoint from $V_r$ for all $i$. By passing to a subsequence, we assume there is $j\neq r$ such that 
$f^{m_i}(x_{n_i})\in V_j$ for all $i$. By Lemma \ref{equico2}, we can also assume that $f^{m_i}(x_{n_i})\to f^j(x)$.  We consider two cases: (a) Suppose $0\leq j<r$. Then $f^{m_i+r-j}(x_{n_i})\to f^r(x)$ which contradicts that $S_i$ is disjoint from $V_r$. (b) Suppose $r<j\leq k-1$. Then  $0\leq k-j+r\leq k-1$ and $f^{m_i+k-j+r}(x_{n_i})\to f^{k+r}(x)=f^r(x)$ which is a contradiction as before.

(iii) From (i) and (ii) we can assume that for every $k$-segment $S$ of $x_n$  and every $0\leq r\leq k-1$ we have that $S\cap V_r$ has exactly one element.  Let $S_n$ be the first $k$-segment of $x_n$, i.e. 
\[
S_n=\{x_n, f(x_n), \cdots, f^{k-1}(x_n)\}.
\]
We claim that there is $n_0$ such that the conclusion of (iii) holds for $S_n$ and $n\geq n_0$. That is to say,  $f^r(x_n)\in V_r$ for all $0\leq r\leq k-1$ and all $n\geq n_0$. Suppose not. Then we easily find $0\leq r,j\leq k-1$  with $r\neq j$ and an increasing sequence $(n_i)_i$ such that $f^{r}(x_{n_i})\in V_j$ for all $i$. 
We also can assume that  $f^{r}(x_{n_i})$ converges to some $y$, and this contradicts the continuity of $f$. 

Now  consider the second $k$-segment of the orbit of $x_n$, i.e. 
\[
S^2_n=\{f(x_n), f^2(x_n), \cdots, f^{k}(x_n)\}.
\]
 We have just shown that $f^r(x_n)\in V_r$ for all $1\leq r\leq k-1$. Since $S^2_n\cap V_i$ has exactly one element, necessarily $f^k(x_n)\in V_0$.
The rest of the argument follows easily by induction. 
\endproof

\begin{lema}
\label{casiperiodico}
Let $X$ be a compact metric countable space and $f:X\to X$ be  a distal homeomorphism. For all $x\in X$ there is $\delta_x>0$ such that if $d(x,y)<\delta_x$, then 
$f^m(y)\in V_r$, for all $m\in \nat$  with  $m\equiv r \mod k$.
\end{lema}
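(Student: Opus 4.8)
The plan is to obtain Lemma \ref{casiperiodico} as a reformulation of Lemma \ref{equico3}(iii), passing from the ``sequence'' language to the ``$\delta$-ball'' language by a routine contradiction argument, with the degenerate case $k=1$ handled separately via Lemma \ref{equico1}. First I would recall that, by Theorem \ref{distal}, every point of $X$ (hence of $X'$) is periodic, so that for the fixed point $x$ of period $k$ and the fixed pairwise disjoint clopen sets $V_0,\dots,V_{k-1}$, the condition ``$f^m(y)\in V_r$ whenever $m\equiv r\bmod k$'' is meaningful for every $y\in X$, and it holds trivially for $y=x$.

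Next I would argue by contradiction. Assume that no $\delta_x>0$ works. Then for every $n\in\nat$ one can choose $y_n\in X$ with $d(x,y_n)<1/n$ together with some $m_n\in\nat$ such that $f^{m_n}(y_n)\nin V_{r_n}$, where $r_n$ denotes the residue of $m_n$ modulo $k$. In particular $y_n\to x$, and each $y_n$ is periodic. If $k\ge 2$, I would apply Lemma \ref{equico3}(iii) to the sequence $(y_n)_n$ to get $n_0$ such that $f^m(y_n)\in V_r$ for all $n\ge n_0$, all $0\le r\le k-1$, and all $m$ with $m\equiv r\bmod k$; any $n\ge n_0$ then contradicts the choice of $y_n$. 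If $k=1$, then $\mathcal O_f(x)=\{x\}$ and $V=V_0$ is a clopen neighbourhood of $x$; since every point of $X'$ is periodic, Lemma \ref{equico1} gives $\mathcal O_f(y_n)\to\{x\}$, hence $\mathcal O_f(y_n)\su V_0$ for all sufficiently large $n$, again contradicting $f^{m_n}(y_n)\nin V_0$. In either case we reach a contradiction, so the required $\delta_x$ exists.

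I do not foresee a genuine obstacle: the substantive work is already contained in Lemmas \ref{equico1} and \ref{equico3}, and the present statement is essentially their ``uniform'' restatement. The only points calling for (minor) attention are the bookkeeping of the residues $r_n=m_n\bmod k$ and the need to treat $k=1$ outside the scope of Lemma \ref{equico3}, which is stated only for $k\ge 2$.
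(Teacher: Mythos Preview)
Your proof is correct and follows essentially the same route as the paper: assume no $\delta_x$ works, extract a sequence $y_l\to x$ violating the conclusion, and contradict Lemma~\ref{equico3}(iii). The only difference is that you explicitly treat the case $k=1$ via Lemma~\ref{equico1}, whereas the paper's proof invokes Lemma~\ref{equico3}(iii) without singling out this degenerate case; your added care here is appropriate, since Lemma~\ref{equico3} is stated only for $k\ge 2$.
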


\proof
Suppose, towards a contradiction, that for all positive integer $l$, there is $y_l\in X$ such that $d(x,y_l)< 1/l$ and there is $m$ such that $f^m(y_l)\nin V_r$ and $m\equiv r \mod k$. Since $y_l\rightarrow x$, this clearly contradicts part (iii) of Lemma \ref{equico3}. 
\endproof

Under the hypothesis of  Lemma \ref{casiperiodico}, it follows immediately that   $\{f^n:\; n\in \nat\}$ is equicontinuous at every point. It is known that equicontinuity is equivalent to uniform almost periodicity (see \cite{Auslander1988} page 36). Now we show that a stronger property holds on countable spaces. 

\begin{teo}
\label{uniforcasiperiodico} Let $X$ be a compact metric countable space and  $f:X\to X$ be a homeomorphism. Then the following are equivalent.

\begin{enumerate}
\item[(i)] Every point is periodic. 
    
\item[(ii)] For all $\varepsilon>0$ there is $l$ such that  $d(x,f^{nl}(x))<\varepsilon$ for all $x\in X$ and all $n\in \nat$ (where $d$ is the metric on $X$).

\item[(iii)] $(X,f)$ is equicontinuous. 

\item[(iv)] There is $p\in \nat^*$ such that $f^p$ is  a homeomorphism.
\end{enumerate} 
\end{teo}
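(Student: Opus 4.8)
The plan is to prove the cyclic chain $(i)\Rightarrow(ii)\Rightarrow(iii)\Rightarrow(iv)\Rightarrow(i)$. Most of the substantive content is already packaged in Lemma~\ref{casiperiodico} (and the lemmas feeding it), so the argument is largely an assembly of earlier results; the one place where a genuine step is needed is extracting a \emph{single} good exponent $l$ from the pointwise statement of Lemma~\ref{casiperiodico}, which is done by compactness.

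For $(i)\Rightarrow(ii)$ I would first note that, since every point is periodic, $(X,f)$ is distal (Theorem~\ref{distal} together with Corollary~\ref{distal2}), so Lemma~\ref{casiperiodico} is available. Fix $\varepsilon>0$. For each $x\in X$ of period $k_x$, use zero-dimensionality of $X$ to choose the clopen sets $V_0,\dots,V_{k_x-1}$ of the standing hypothesis preceding Lemma~\ref{equico2} with the extra requirement that each has diameter $<\varepsilon$; Lemma~\ref{casiperiodico} then yields $\delta_x>0$ such that $d(x,y)<\delta_x$ forces $f^m(y)\in V_r$ whenever $m\equiv r\pmod{k_x}$. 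Taking $m=0$ and $m=nk_x$ with $n\in\nat$ shows that both $y$ and $f^{nk_x}(y)$ lie in $V_0$, hence $d(y,f^{nk_x}(y))<\varepsilon$ for all $y\in B(x,\delta_x)$ and all $n\in\nat$. By compactness pick a finite subcover $B(x_1,\delta_{x_1}),\dots,B(x_j,\delta_{x_j})$ and put $l=\mathrm{lcm}(k_{x_1},\dots,k_{x_j})$. For an arbitrary $y\in X$ choose $i$ with $y\in B(x_i,\delta_{x_i})$; since $k_{x_i}\mid l$ we have $nl\equiv 0\pmod{k_{x_i}}$ for every $n\in\nat$, so $y$ and $f^{nl}(y)$ both lie in the set $V_0$ associated with $x_i$, giving $d(y,f^{nl}(y))<\varepsilon$. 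This is $(ii)$.

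For $(ii)\Rightarrow(iii)$ I would first observe that $(ii)$ automatically holds for negative exponents with the same $l$: applying $d(z,f^{nl}(z))<\varepsilon$ to $z=f^{-nl}(x)$ gives $d(f^{-nl}(x),x)<\varepsilon$. Now fix $\varepsilon>0$, take $l$ as in $(ii)$ for $\varepsilon/3$, and use uniform continuity of the finitely many maps $f^0,\dots,f^{l-1}$ to obtain $\delta>0$ with $d(x,y)<\delta\Rightarrow d(f^j(x),f^j(y))<\varepsilon/3$ for $0\le j<l$. For $n\in\Z$ write $n=ml+j$ with $0\le j<l$ and $m\in\Z$; then $f^n=f^{ml}\circ f^j$, so $d(f^n(x),f^j(x))<\varepsilon/3$ (by $(ii)$ if $m\ge 0$, by the observation if $m<0$) and likewise for $y$, whence $d(f^n(x),f^n(y))<\varepsilon$ for all $n\in\Z$; that is, $(X,f)$ is equicontinuous. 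Then $(iii)\Rightarrow(iv)$: an equicontinuous system is distal, so by Corollary~\ref{distal2} and Theorem~\ref{distal} every point of $X$ is periodic, and Lemma~\ref{igualesE} supplies $p\in\nat^*$ with $f^p=1_X$, which is a homeomorphism. Finally $(iv)\Rightarrow(i)$ is exactly Lemma~\ref{todosperiodicos}.

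I expect the main obstacle to be precisely the compactness step in $(i)\Rightarrow(ii)$: checking that the clopen neighbourhoods of the orbit points can be shrunk below the target $\varepsilon$ while still satisfying the standing hypothesis of Lemma~\ref{casiperiodico}, and that the least common multiple of finitely many periods genuinely acts as a uniform exponent for all of $X$. Everything else reduces to results established in the previous sections, and combining $(i)\Leftrightarrow(iii)$ with those results closes the list of equivalences announced in the introduction.
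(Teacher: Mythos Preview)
Your proof is correct and follows essentially the same route as the paper's. The implications $(i)\Rightarrow(ii)$, $(ii)\Rightarrow(iii)$, and $(iv)\Rightarrow(i)$ are handled identically (your extra remark that $(ii)$ passes to negative exponents is a useful clarification the paper leaves implicit). The one genuine deviation is at $(iii)\Rightarrow(iv)$: the paper argues directly that equicontinuity makes every $f^p$ continuous and then invokes Theorem~\ref{formulainverso} to get bijectivity, whereas you shortcut through $(i)$ (equicontinuous $\Rightarrow$ distal $\Rightarrow$ every point periodic by Theorem~\ref{distal}, then Lemma~\ref{igualesE} gives $f^p=1_X$). Your detour is perfectly valid for establishing the cycle and is arguably more economical, since it avoids appealing to the machinery of \S4; the paper's route has the mild advantage of exhibiting \emph{every} $f^p$ as a homeomorphism rather than just one.
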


\begin{proof}
(i) $\Rightarrow$ (ii). Let $\varepsilon>0$. For each $x\in X$, let $k_x$ be the period of $x$. Let us fix a collection $\{V^x_r: 0\leq r<k_x\}$ of pairwise disjoint open sets  as we did before.  Clearly, we can assume that each $V^x_k$ has diameter smaller than $\varepsilon$. Let $\delta_x$ be given by the Lemma \ref{casiperiodico}. We can assume that the $\delta_x$-ball $B_x$ around $x$ is a subset of $V^x_0$.  By compactness, there are $x_1, \cdots, x_m$ such that $X=\bigcup_{i=1}^m B_{x_i}$. Let $l$ be the least common multiple of the periods of $x_1, \cdots, x_m$. Let $n\in \nat$ and  $y\in X$. Pick $i$ such that $y\in B_{x_i}$.  From Lemma \ref{casiperiodico} we conclude that $f^{nl}(y)\in V_0^{x_i}$. Hence $d(y,f^{nl}(y))<\varepsilon$.

(ii) $\Rightarrow$ (iii). Let $\varepsilon>0$.  Let $l$ be  such that  $d(x,f^{nl}(x))<\varepsilon/3$ for all $x\in X$ and all $n\in \nat$. Since $f^i$ is uniformly continuous for all $0\leq i<l$, there is  $\delta$ with $0<\delta<\varepsilon/3$ such that if $d(x,y)<\delta$, then $d(f^i(x),f^i(y))<\varepsilon/3$ for $0\leq i<l$. We will show that 
$d(f^m(x),f^m(y))<\varepsilon$ for all $m$ and all $x,y\in X$ with $d(x,y)<\delta$. If fact, let $m\in \nat$ and $0\leq i<l$ such that $m=nl+i$ for some $n\in \nat$. Suppose $d(x,y)<\delta$,  then
\[
d(f^m(x),f^m(y))\leq d(f^{nl}(f^i(x)),f^i(x))+ d(f^i(x),f^i(y))+d(f^{nl}(f^i(y)),f^i(y))<\varepsilon.
\]

(iii) $\Rightarrow$ (iv). It is well known that every equicontinuous system is WAP and  distal, in particular, $E(X,f,\Z)$ is a group of homeomorphism (see \cite{Auslander1988} page 69).  For the sake of completeness, we present a sketch of an argument showing that each $f^p$ is a homeomorphism.  It is quite elementary to verify  that  when $(X,f)$ is equicontinuous, $f^p$ is continuous for any $p\in\beta\nat$. On the other hand, by Theorem \ref{formulainverso}, each $f^p$ is a bijection, and hence a homeomorphism.

(iv) $\Rightarrow$ (i). See Lemma \ref{todosperiodicos}.
\end{proof}

We have now all what is needed to show the result mentioned in the introduction. 

\begin{teo}Let $X$ be a compact metric countable space and  $f:X\to X$ be a homeomorphism. Then the following are equivalent.

\begin{enumerate}

\item[(i)] $(X,f)$ is equicontinuous.

\item[(ii)] $(X,f)$ is a distal.

\item[(iii)] Every point of $X$ is periodic.

\item[(iv)] There is $p\in \nat^*$ such that $f^p$ is a homeomorphism.

\item[(v)] For all $\varepsilon>0$ there is $l$ such that  $d(x,f^{nl}(x))<\varepsilon$ for all $x\in X$ and all $n\in \nat$ (where $d$ is the metric on $X$).

\item[(vi)] $E(X,f,\Z)=\overline{\{f^n:\; n\in\nat\}}$.
\end{enumerate}
\end{teo}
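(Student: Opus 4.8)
The plan is to assemble the final theorem purely by citing the equivalences already established in the excerpt, so the ``proof'' is essentially a routing diagram among earlier results. First I would observe that the six statements (i)--(vi) split into two groups. Statements (i), (iii), (iv), (v) are precisely the four conditions of Theorem \ref{uniforcasiperiodico}, listed there in the order (iii)$\leftrightarrow$(v)$\leftrightarrow$(i)$\leftrightarrow$(iv) (matching our current (iii), (v), (i), (iv)). So the equivalence of those four is immediate from that theorem and requires no new work.

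Next I would bring in statement (ii), distality. The implication (iii)$\Rightarrow$(ii) is the forward direction of Theorem \ref{distal} composed with Corollary \ref{distal2}: every point periodic gives positively distal, which on a countable space is the same as distal. Conversely (ii)$\Rightarrow$(iii): a distal homeomorphism is in particular positively distal, and then Theorem \ref{distal} again yields that every point of $X$ is periodic. Hence (ii) folds into the cycle through (iii).

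Finally I would handle statement (vi), $E(X,f,\Z)=\overline{\{f^n:\;n\in\nat\}}$, which by definition is just $E(X,f,\nat)=E(X,f,\Z)$. This is exactly Theorem \ref{ZNdistality}: $E(X,f,\nat)=E(X,f,\Z)$ if and only if every point of $X$ is periodic. So (vi)$\leftrightarrow$(iii) is free as well. Putting the three blocks together — the four-way equivalence of Theorem \ref{uniforcasiperiodico}, the two-way equivalence with distality via Theorem \ref{distal} and Corollary \ref{distal2}, and the two-way equivalence with (vi) via Theorem \ref{ZNdistality} — all six statements are equivalent. I do not anticipate a genuine obstacle here; the only thing to be careful about is bookkeeping: making sure each cited theorem really is stated for a homeomorphism on a compact metric \emph{countable} space (it is, in every case invoked) and that the ``positively distal'' versus ``distal'' distinction is bridged by Corollary \ref{distal2} rather than used carelessly. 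The write-up would simply be: ``The equivalence of (i), (iii), (iv) and (v) is Theorem \ref{uniforcasiperiodico}. By Theorem \ref{distal} and Corollary \ref{distal2}, (ii) is equivalent to (iii). By Theorem \ref{ZNdistality}, (vi) is equivalent to (iii). $\qed$''
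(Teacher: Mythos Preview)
Your proposal is correct and takes essentially the same approach as the paper: both assemble the theorem by citing Theorem \ref{uniforcasiperiodico} for the equivalence of (i), (iii), (iv), (v), Theorem \ref{ZNdistality} for (vi)$\Leftrightarrow$(iii), and Theorem \ref{distal} for linking (ii) with (iii). The only cosmetic difference is that the paper spells out a direct argument for (i)$\Rightarrow$(ii) (equicontinuity implies distality) rather than routing through (iii) and Corollary \ref{distal2} as you do; either way works and the content is the same.
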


\proof (i) $\Rightarrow$ (ii). It is well known but we skecht the argument for the sake of completeness. We will show that $f^p$ is injective for any $p\in\beta\nat$.  Let $x,y\in X$ with $x\neq y$. Suppose, towards a contradiction, that $f^p(x)=f^p(y)$. Let $0<\varepsilon < d(x,y)$. By equicontinuity, there is  $0<\delta$ such that   if $d(w,z)<\delta$, then $d(f^n(w),f^n(z))<\varepsilon$ for all $n\in \Z$. By the definition of $f^p$,  as $f^p(x)=f^p(y)$, there is $m$ such that $d(f^m(x),f^m(y))<\delta$. Hence $d(x,y)=d(f^{-m}(f^m(x)),f^{-m}(f^m(y)))<\varepsilon$, a contradiction.

(ii) $\Rightarrow$ (iii). See Theorem \ref{distal}.

(iii) $\Rightarrow$ (iv), (iv) $\Rightarrow$ (v) and (v) $\Rightarrow$ (i). See Theorem \ref{uniforcasiperiodico}

(vi) $\Leftrightarrow$ (iii). By Theorem \ref{ZNdistality}.

\endproof

\noindent {\bf Acknowledgment:}
We are  thankful to the referee for his (her)  comments that improved the presentation of the paper. 

\bibliographystyle{plain}

\end{document}